\newtheorem{thm}{Theorem}[section]
\theoremstyle{definition}
\newtheorem{de}[thm]{Definition}
\newtheorem{ex}[thm]{Example}
\newtheorem{co}[thm]{Corollary}
\newtheorem{re}[thm]{Remark}
\newtheorem{pro}[thm]{Proposition}
\DeclareMathOperator*{\argmin}{argmin}
\newcommand{\N}{\mathbb{N}}                                            
\newcommand{\R}{\mathbb{R}}                                         
\newcommand{\eps}{\varepsilon}                                         
\DeclareMathOperator{\CAT}{CAT}                                        
\newcommand{\abs}[1]{\left\vert#1\right\vert}                          
\begin{document}

\title{Moduli of regularity and rates of convergence for Fej\'er monotone sequences}

\author{Ulrich Kohlenbach$^{a}$, Genaro L\'{o}pez-Acedo$^{b}$, Adriana Nicolae$^{b,c}$}
\date{}
\maketitle

\begin{center}
{\scriptsize
$^{a}$Department of Mathematics, Technische Universit\" at Darmstadt, Schlossgartenstra\ss{}e 7, 64289 Darmstadt, Germany
\ \\
$^{b}$Department of Mathematical Analysis - IMUS, University of Seville, Sevilla, Spain
\ \\
$^{c}$Department of Mathematics, Babe\c s-Bolyai University, Kog\u alniceanu 1, 400084 Cluj-Napoca, Romania \\
\ \\
E-mail addresses: kohlenbach@mathematik.tu-darmstadt.de (U. Kohlenbach), glopez@us.es (G. L\'{o}pez-Acedo), anicolae@math.ubbcluj.ro (A. Nicolae)
}
\end{center}

\begin{abstract}
In this paper we introduce the concept of modulus of regularity as a tool to analyze the speed of convergence, including the finite termination, for classes of Fej\'{e}r monotone sequences which appear in fixed point theory, monotone operator theory, and convex optimization. This concept allows for a unified approach to several notions such as weak sharp minima, error bounds, metric subregularity, H\"older regularity, etc., as well as to obtain rates of convergence for Picard iterates, the Mann algorithm, the proximal point algorithm and the cyclic algorithm. As a byproduct we obtain a quantitative version of the well-known fact that for a convex lower semi-continuous function the set of  minimizers coincides with the set of zeros of its subdifferential and the set of fixed points of its resolvent.\\

\noindent {\em MSC: } 41A25; 41A52; 41A65; 53C23; 03F10\\

\noindent {\em Keywords}:  Fej\'{e}r monotone sequences, rates of convergence, finite termination,  metric subregularity, H\"older regularity, weak sharp minima, bounded regularity. 
\end{abstract}

%

\section{Introduction}

Many problems in applied mathematics can be brought into the following format: 

\begin{center} 
Let $(X,d)$ be a metric space and $F:X\to \overline\R$ be a function: find a zero of $F$, 
\end{center} 
where as usual $\overline \R = \R \cup \{-\infty,\infty\}$. This statement covers many equilibrium, 
fixed point and minimization problems. Numerical methods, e.g. those based on suitable iterative techniques, 
usually yield sequences $(x_n)$ in $X$ of approximate zeros, i.e. 
$|F(x_n)|< 1/n.$ Based on extra assumptions (e.g., the compactness of 
$X,$ the Fej\'er monotonicity of $(x_n)$ and the continuity of $F$) 
one then shows that $(x_n)$ 
converges to an actual zero $z$ of $F.$ 
An obvious question then concerns the speed 
of the convergence of $(x_n)$ towards $z$ and whether there is an effective 
rate of convergence. \\[1mm] For general families of such problems
formulated for a whole class ${\cal F}$ of functions $F$ one largely has the following 
dichotomy:
\begin{enumerate}
\item[(i)]
 if the zero for $F\in {\cal F}$ is unique, then it usually is possible to give an 
explicit effective rate of convergence, 
\item[(ii)]
if ${\cal F}$ contains functions $F$ with many zeros, one 
usually can use the non-uniqueness to define a (computable) function 
$F\in {\cal F}$ for which $(x_n)$ does not 
have a computable rate of convergence.
\end{enumerate} `(i)' e.g. holds for most fixed point 
iterations involving functions $T:X\to X$ which satisfy some form of 
a contractive condition which guarantees the uniqueness of the fixed 
point (and hence of the zero of $F(x):=d(x,Tx)$). The obvious case, of course, 
is the Banach fixed point theorem, but there are also many situations where 
this is highly nontrivial and tools from logic were used (see \cite{Br(JSL)} 
which in turn is based on methods from \cite{KoBook})  
to extract effective rates of convergence for Picard iterates, 
see, e.g., \cite{ABJL} and the references listed in \cite{Br(JSL)}. \\[1mm] `(ii)' is most strikingly 
exemplified in \cite{Neumann}, where it is shown that all the usual iterations 
used to compute fixed points of nonexpansive mappings already fail in general 
to have computable rates of convergence even for simple computable firmly 
nonexpansive mappings $T:[0,1]\to [0,1].$
\\[1mm] Even though sometimes left implicit, the effectivity of iterative 
procedures in the case of unique zeros (or fixed points) rests on the existence of an effective so-called modulus of uniqueness:  
let $(X,d)$ be a metric space, $F : X \to \overline\R$ with $\text{zer}\; F=\{z\}$ 
and $r>0.$

\begin{de}
We say that $\phi : (0,\infty)  \to (0,\infty)$ is a \textit{modulus of uniqueness} for $F$ w.r.t. $\text{zer}\; F$ and $\overline{B}(z,r)$  
if for all $\eps> 0$ and $x \in \overline{B}(z,r)$ we have the following implication 
\[ |F(x)| < \phi(\eps) \; \Rightarrow \; d(x,z) < \eps.\]
\end{de}
Suppose now that $(x_n)$ is a sequence of $(1/n)$-approximate zeros contained in 
$\overline{B}(z,r)$ for some $r>0.$ If $\phi$ is a modulus of uniqueness for $F$ w.r.t. $\text{zer}\; F$ and 
$\overline{B}(z,r),$ then 
\[ \forall k\ge \lceil 1/\phi(\varepsilon)\rceil \, (d(x_k,z)<\varepsilon). \]

The concept of `modulus of uniqueness' (in the case of families of 
compact metric spaces $K_u$ parametrized by elements $u\in P$ in some 
Polish space $P$) can be found in \cite{Ko} and was used there primarily 
in the context of best approximation theory. In particular, 
it was applied to the uniqueness of the best uniform (Chebycheff) 
approximation of $f\in C[0,1]$ by elements $p$ in some Haar subspace of 
$C[0,1]$ (e.g. the subspace $P_n$ of 
algebraic polynomials of degree $\le n$) and of best approximation in 
the mean ($L^1$-approximation) of $f$ by polynomials in $P_n.$ Proof-theoretic metatheorems applied to the 
nonconstructive uniqueness proofs
in these cases guarantee the extractability of explicit 
moduli of uniqueness (of low complexity) depending only on $n,\varepsilon,$ 
a modulus of continuity $\omega$ of $f$ and some bound 
$M\ge \| f\|_{\infty}$ (where the latter can be avoided in the cases at 
hand by applying a shift $\tilde{f}(x):=f(x)-f(0)$). 
This was explicitly carried out in \cite{Ko,Ko93}
for Chebycheff approximation, where the modulus becomes even 
linear (`constant of strong unicity') if as additional 
input a lower bound $0<l\le \text{dist}(f,P_n)$ is given. The $L^1$-case is treated in \cite{KoOl}. We refer to \cite{KoBook} for more details. \\[1mm] 
In this paper, we are concerned with a generalization of the concept of 
`modulus of uniqueness' called `modulus of regularity' which is applicable also in the non-unique case by considering the distance of a point to the set $\text{zer}\; F$ (see Definition \ref{def-mod-reg}). Note that this concept coincides with that of a `modulus of uniqueness' if $\text{zer}\; F$ is a singleton.
\\[1mm] Again, whenever $(x_n)$ is a sequence of $(1/n)$-approximate zeros of $F$ in $\overline{B}(z,r),$ where $z \in \text{zer}\; F$ and $r > 0$,
$x_k$  is $\varepsilon$-close to some zero 
$z_k\in \text{zer}\; F$ for all $k\ge \lceil 1/\phi(\varepsilon)\rceil.$ \\[1mm] 
A condition which converts this into a rate of convergence is that $(x_n)$ is Fej\'er monotone w.r.t. 
$\text{zer}\;F,$ i.e. for all $z\in \text{zer}\;F$ and $n\in\N$
\[ d(x_{n+1},z)\le d(x_n,z). \] In this case we can infer that for all $k,m\ge  \lceil 1/\phi(\varepsilon)\rceil$ 
\[ d(x_k,x_m)< 2\varepsilon. \]  
So if $X$ is complete and $\text{zer}\; F$ is closed, then $(x_k)$ converges with rate  $\lceil 1/\phi(\varepsilon/2)\rceil$ 
to a zero of $F$ (see Theorem \ref{thm-gen-rate}). 
\\[1mm] As discussed above, in general one cannot expect to have an effective rate of convergence in the non-unique case 
and so the existence of an explicit computable modulus $\phi$ of regularity w.r.t.  $\text{zer}\;F$ will rest on very 
specific properties of the individual mapping $F$ (see Remark \ref{fn-noncomp}).
\\[1mm] Nevertheless, noneffectively one always has a modulus of regularity w.r.t.  $\text{zer}\;F$ if $X$ is compact and 
$F$ is continuous with $\text{zer}\; F\not=\emptyset$ (see Proposition \ref{existence-of-modulus}). This strikingly illustrates
the difference between the unique and the non-unique case: a modulus of uniqueness is a uniform version of having a unique 
zero which - e.g. by logical techniques - can be extracted in effective form from a given proof of uniqueness  
\[ F(x)=0=F(z) \to x=z, \]  
(see \cite[Section 15.2]{KoBook} with Corollary 17.54 instead of Theorem 15.1 
to be used in the noncompact 
case) whereas a modulus of regularity w.r.t.  $\text{zer}\;F$ is a uniform version of the trivially true property 
\[ F(x)=0  \to \forall \varepsilon>0\,\exists z\in \text{zer}\; F\, (d(x,z)<\varepsilon), \] which, 
however, has too complicated a logical form to guarantee - for computable $F$ 
and effectively represented $X$ - computability even in the presence of compactness. \\[1mm]
While the concept of a modulus of regularity (and also Proposition \ref{existence-of-modulus}) 
has been used in various special situations before (see, e.g., \cite{Anderson} and the literature cited there), 
we develop it in this paper 
as a general tool towards a unified treatment of a number of concepts studied in convex optimization such as weak sharp minima, error bounds, metric subregularity, H\"older regularity, etc., which can be seen as instances of moduli
of regularity w.r.t. $\text{zer}\;F$ for suitable choices of $F.$ Actually, as it will be pointed out in Section \ref{section-modulus}, for minimization problems the notion of modulus of regularity is tightly related to the ones of weak sharp minima or error bounds. 
\\[1mm] 
After some preliminaries, we show in Section \ref{section-modulus} how 
the concept of `modulus of regularity' w.r.t. $\text{zer}\; F$ can be 
specialized to suitable notions of `modulus of regularity' for 
equilibrium problems, fixed point problems, the problem of finding a zero 
of a set-valued operator and minimization problems. In Theorem 
\ref{thm-equiv-mod}, 
we give - in terms of the respective moduli of regularity - a quantitative version 
of the well-known identities between 
minimizers of proper, convex and lower semi-continuous functions $f,$ 
fixed points of the resolvent $J_{\gamma \partial f}$ of $\partial f$ 
of order $\gamma>0$ and the zeros of $\partial f:$
$$\argmin \; f = \text{Fix} \; J_{\gamma \partial f} = \text{zer}\; \partial f.$$ 
In Section \ref{section-rates} we use the concept of `modulus of regularity' to give a general convergence result, Theorem \ref{thm-gen-rate}, which provides, under suitable assumptions, explicit rates of convergence for Fej\'{e}r monotone sequences. In particular, this result can be employed for various iterative methods such as Picard and Mann iterations, cyclic projections, as well as the 
proximal point algorithm. Together with 
the concept of metric regularity for finite families of intersecting sets 
from \cite{B96}, this also applies to convex feasibility problems. 

\section{Preliminaries} \label{section-prelim}
Throughout this paper, if not stated otherwise, $(X,d)$ stands for a complete metric space, which is the natural setting for the concepts and results contained in this work. Although most of the algorithms considered in the subsequent sections are defined in Hilbert spaces, which we usually denote by $H$, in some situations we also refer to the context of $\CAT(\kappa)$ spaces, $\kappa \in \R$, which are also known as Alexandrov spaces of curvature bounded above by $\kappa$ and which we define in the sequel. 

For $x \in X$ and $r > 0$, we denote the {\it open ball} and the {\it closed ball} centered at $x$ with radius $r$ by $B(x,r)$ and $\overline{B}(x,r)$, respectively. If $C$ is a subset of $X$, the {\it distance} of a point $x \in X$ to $C$ is $\text{dist}(x,C) := \inf\{d(x,c) : c \in C\}$. Having $x,y \in X$, a {\it geodesic} from $x$ to $y$ is a mapping $c:[0,l]\subseteq \R \to X$ such that $c(0)=x$, $c(l)=y$ and $d(c(t),c(s))=\abs{t-s}$ for all $t,s\in[0,l]$. The image of $c$ is called a {\it geodesic segment} joining $x$ to $y$ and is not necessarily unique.  A point $z \in X$ belongs to a geodesic segment joining $x$ to $y$ if and only if there exists $t\in [0,1]$ such that $d(x,z)=td(x,y)$ and $d(y,z)=(1-t)d(x,y)$ and we write $z=(1-t)x+ty$ if no confusion arises. We say that $X$ is a {\it (uniquely) geodesic metric space} if every two points in it are joined by a (unique) geodesic. A set $C$ in a uniquely geodesic metric space is called {\it convex} if given any two points in $C$, the geodesic segment joining them is contained in $C$.

One way to define $\CAT(\kappa)$ spaces is via a quadrilateral condition which we state next for the case $\kappa = 0$. More precisely, a geodesic metric space $(X,d)$ is said to be a {\it $\CAT(0)$ space} if 
\begin{equation}\label{def-CAT0}
d(x,y)^2 + d(u,v)^2 \le d(x,v)^2 + d(y,u)^2 + d(x,u)^2 + d(y,v)^2,
\end{equation}
for any $x,y,u,v \in X$ (see \cite{BerNik08}). Any $\CAT(0)$ space is uniquely geodesic. The Hilbert ball with the hyperbolic metric is a prime example of a CAT$(0)$ space, see \cite{GR84}. Other examples include Hilbert spaces, $\mathbb{R}$-trees, Euclidean buildings, Hadamard manifolds, and many other important spaces. When $\kappa \ne 0$, a related inequality recently given in \cite{BerNik15} can be used to introduce $\CAT(\kappa)$ spaces. 

\mbox{}

In the following we recall definitions and properties of operators which are significant in this paper. We refer to \cite{BC17} for a detailed exposition on this topic. Let $(X,d)$ be a metric space, $C \subseteq X$ nonempty and $T:C\rightarrow X$. The {\it fixed point set} of $T$ is denoted by $\text{Fix}\; T :=\{x\in C : Tx = x\}$. The mapping $T$ is said to be {\it nonexpansive} if $d(T(x), T(y)) \leq d(x,y)$ for all $x,y \in C$. Likewise, $T$ is said to be {\it quasi-nonexpansive} if $\text{Fix}\; T \ne \emptyset$ and $d(T(x), z) \le d(x,z)$ for all $z \in \text{Fix}\; T$. Suppose next that $(X,d)$ is a uniquely geodesic metric space. If $C$ is closed and convex, then $\text{Fix}\; T$ is also closed and convex, whenever $T$ is quasi-nonexpansive. We say that $T$ is {\it firmly nonexpansive} if 
\[d(T(x),T(y)) \le d((1-\lambda)x + \lambda T(x),(1-\lambda)y + \lambda T(y)),\]
for all $x,y\in C$ and $\lambda\in [0,1]$. When $X$ is a Hilbert space, there are several equivalent definitions of firm nonexpansivity, one of them being that $T$ can be written as $T = (1/2)\text{Id} + (1/2)S$, where $S$ is nonexpansive.

Let $A$ be a set-valued operator defined on a Hilbert space $H$, $A: H \rightarrow 2^H$. We say that $A$ is {\it monotone} if $\langle x^*-y^*,x-y\rangle \geq0$ for all $x,y \in H$, $x^* \in A(x)$, $y^* \in A(y)$. Suppose next that $A$ is monotone. The {\it resolvent} of $A$ of order $\gamma > 0$ is the mapping $J_{\gamma A} := (\text{Id} + \gamma A)^{-1}$ defined on $\text{ran}(\text{Id} + \gamma A)$, which can be shown to be single-valued and firmly nonexpansive. Denoting the set of zeros of $A$ by $\text{zer}\; A := \{x \in H : O \in A(x)\}$, we immediately have $\text{Fix}\; J_{\gamma A} = \text{zer}\; A$. The {\it reflected resolvent} is the mapping $R_{\gamma A} := 2J_{\gamma A} - \text{Id}$, which is nonexpansive as $J_{\gamma A}$ is firmly nonexpansive. If the monotone operator $A$ has no proper monotone extension, then it is called {\it maximal monotone}. In this case $J_{\gamma A}$ and $R_{\gamma A}$ are defined on $H$. 

Let $f:H\to (-\infty,\infty]$ be proper. The {\it subdifferential} of $f$ is the set-valued operator $\partial f : H \to 2^H$ defined by
\[\partial f(x):=\{u\in H:\langle u, y-x\rangle\leq f(y)-f(x),\;\forall y\in H\}.\]
It is easy to see that $\partial f$ is monotone. Denoting the {\it set of minimizers} of $f$ by $\argmin f := \{x\in H : f(x)\le f(y),\;\forall y\in H\}$, we have $\text{zer}\;\partial f = \argmin f = \text{Fix} \; J_{\gamma \partial f}$.

Let $C \subseteq H$ be nonempty and convex. Recall that the {\it indicator function} $\delta_C : H \to [0,\infty]$ is defined by
\[
\delta_C(x):=\left\{
\begin{array}{ll}
0, & \mbox{if } x \in C,\\
\infty, & \mbox{otherwise}
\end{array}
\right.
\]
and the {\it normal cone map} $N_C : H \to 2^H$ is
\[
N_C(x):=\left\{
\begin{array}{ll}
\{u \in H : \langle u, c - x \rangle \le 0,\;\forall c\in C\}, & \mbox{if } x \in C,\\
\emptyset, & \mbox{otherwise}.
\end{array}
\right.
\]
Clearly, $\partial \delta_C = N_C$.
 
Suppose now that $f$ is additionally convex and lower semi-continuous. Then $\text{int}\; \text{dom}\; f = \text{cont}\; f \subseteq \text{dom}\;\partial f \subseteq \text{dom}\; f$ and $\partial f $ is a maximal monotone operator. Note that if $C \subseteq H$ is nonempty, closed and convex, then $\delta_C$ is proper, convex and lower semi-continuous and $N_C$ is maximal monotone.

The mapping $\text{Prox}_f : H \to H$,
\begin{equation}\label{eq:def-F-mu}
\text{Prox}_f(x):=\argmin_{y\in H}\left(f(y)+\frac{1}{2}\|x - y\|^2\right),
\end{equation}
is well-defined and called the {\it proximal mapping} of $f$. Note that $J_{\gamma \partial f} = \text{Prox}_{\gamma f}$ for all $\gamma > 0$. One can also show that 
\begin{equation} \label{ineq-res}
f(J_{\gamma \partial f}(x)) - f(y) \le \frac{1}{2\gamma}\left(\|y-x\|^2 - \|J_{\gamma \partial f}(x) - x\|^2 - \|J_{\gamma \partial f}(x) - y\|^2\right),
\end{equation}
for every $\gamma > 0$ and $x,y \in H$ (see, e.g., \cite[Lemma 3.2]{Bac14}).

\mbox{}

The metric projection also plays an important role in our further discussion. Let $(X,d)$ be a metric space and $C \subseteq X$ nonempty. The {\it metric projection} $P_C$ onto $C$ is the mapping $P_C : X\to 2^C$ defined by $P_C(x):=\{ y \in C : d(x,y)=\mbox{dist}(x,C)\}$. If $X$ is a complete $\CAT(0)$ space and $C$ is nonempty, closed and convex, then $P_C : X \to C$ is well-defined, single-valued and firmly nonexpansive. Moreover, 
\begin{equation}\label{eq-CAT0-proj}
d(x,P_C x)^2 + d(P_C x, y)^2 \le d(x,y)^2,
\end{equation}
for any $x \in X$ and $y \in C$. Note that in Hilbert spaces, $J_{N_C} = \text{Prox}_{\delta_C} = P_C$.

\mbox{}

The notions of Fej\'er monotonicity and asymptotic regularity are central in the study of convergence of algorithms associated to nonexpansive-type operators. Let $(X,d)$ be a metric space and $C\subseteq X$ nonempty. A sequence $(x_n) \subseteq X$ is {\it Fej\'er monotone} with respect to $C$ if $d(x_{n+1},p)\le d(x_n,p)$ for all $n\in\N$ and $p\in C$. We say that an iteration $(x_n) \subseteq C$ associated to a mapping $T : C \to C$ is {\it asymptotically regular} if $\lim_{n \to \infty}d(x_n,Tx_n) = 0$ for any starting point in $C$. In this case, a function $\alpha : (0,\infty) \to \N$ is a {\it rate of asymptotic regularity} for $(x_n)$ if
\[\forall \varepsilon > 0 \, \forall n\ge \alpha(\eps) \, \left( d(x_n, Tx_n)<\eps\right).\]

We end this subsection with a definition that will be needed later on. A function $\theta:\N\to\N$ is a {\it rate of divergence} for a series $\sum_{n \ge 0}\gamma_n$ if  $\gamma_n \ge 0$ and $\sum_{k=0}^{\theta(n)} \gamma_k \ge n$ for all $n \in \N$.

\section{Modulus of regularity}\label{section-modulus}
Let $(X,d)$ be a metric space and $F : X \to \overline{\R}$ with $\text{zer}\; F \ne \emptyset$.

\begin{de}\label{def-mod-reg}
Fixing $z \in \text{zer}\; F$ and $r > 0$, we say that $\phi : (0,\infty)  \to (0,\infty)$ is a \textit{modulus of regularity} for $F$ w.r.t. $\text{zer}\; F$ and $\overline{B}(z,r)$ if for all $\eps> 0$ and $x \in \overline{B}(z,r)$ we have the following implication
\[|F(x)| < \phi(\eps) \; \Rightarrow \; {\rm dist}(x, \text{zer}\; F) < \eps.\]

If there exists $z \in \text{zer}\; F$ such that $\phi : (0,\infty)  \to (0,\infty)$ is a modulus of regularity for $F$ w.r.t. $\text{zer}\; F$ and $\overline{B}(z,r)$ for any $r > 0$, then $\phi$ is said to be a \textit{modulus of regularity} for $F$ w.r.t. $\text{zer}\; F$.
\end{de}

Our first result shows that such a modulus always exists when the domain is compact and the function is continuous.
 
\begin{pro}\label{existence-of-modulus}
If $X$ is proper and $F$ is continuous, then for any $z \in \text{zer}\; F$ and $r > 0$, $F$ has a modulus of regularity w.r.t.  $\text{zer}\; F$ and $\overline{B}(z,r)$.
\end{pro}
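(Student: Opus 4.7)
The natural strategy is to define $\phi(\eps)$ as the infimum of $|F|$ over the ``bad'' set, i.e.\ the points in $\overline{B}(z,r)$ that are $\eps$-far from $\text{zer}\; F$, and to use properness of $X$ together with continuity of $F$ to show that this infimum is strictly positive. Concretely, I would fix $\eps>0$ and set
\[
S_\eps := \{x \in \overline{B}(z,r) : \text{dist}(x, \text{zer}\; F) \ge \eps\},\qquad \phi(\eps) := \inf\{|F(x)| : x \in S_\eps\},
\]
with the convention that $\phi(\eps) := 1$ (say) when $S_\eps = \emptyset$, in which case the conclusion holds trivially for every $x \in \overline{B}(z,r)$.

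First I would check that $S_\eps$ is compact whenever it is nonempty. Since $X$ is proper, $\overline{B}(z,r)$ is compact; since $F$ is continuous, $\text{zer}\; F = F^{-1}(\{0\})$ is closed, so $x \mapsto \text{dist}(x,\text{zer}\;F)$ is (1-Lipschitz and hence) continuous, making $S_\eps$ the intersection of $\overline{B}(z,r)$ with the closed set $\{x : \text{dist}(x,\text{zer}\;F)\ge \eps\}$.

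Next I would show that $\phi(\eps)>0$. The map $|F| : X \to [0,\infty]$ is continuous, so by compactness of $S_\eps$ it attains its infimum at some $x_0 \in S_\eps$. Because $\text{dist}(x_0,\text{zer}\;F)\ge \eps>0$, the point $x_0$ is not in $\text{zer}\; F$, hence $\phi(\eps) = |F(x_0)| > 0$. This gives $\phi : (0,\infty) \to (0,\infty)$.

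Finally I would verify the defining implication. Fix $x \in \overline{B}(z,r)$ and argue by contraposition: if $\text{dist}(x,\text{zer}\;F) \ge \eps$, then $x \in S_\eps$, so by definition of $\phi(\eps)$ as an infimum over $S_\eps$ we have $|F(x)| \ge \phi(\eps)$; equivalently $|F(x)| < \phi(\eps)$ forces $\text{dist}(x,\text{zer}\;F) < \eps$. The only genuinely substantive step is the attainment of the infimum by a point outside $\text{zer}\; F$, and this is exactly where properness plus continuity do the work; the extended real-valued nature of $F$ is harmless since $|F|$ still attains its minimum on the compact set $S_\eps$ (with the value $+\infty$ allowed but never forced on the minimum, which is $|F(x_0)| \in [0,\infty)$ because $x_0$ is a minimizer and any finite value in $S_\eps$ yields a smaller infimum—if no such finite value exists, $S_\eps$ must be empty by continuity of $F$ at any zero and we fall into the trivial case).
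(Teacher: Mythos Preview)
Your main argument is correct and follows a direct route that differs from the paper's: you define $\phi(\eps)$ explicitly as $\inf_{S_\eps}|F|$ and use compactness of $S_\eps$ to show this infimum is attained at a point outside $\text{zer}\; F$ and hence is positive, whereas the paper argues by contradiction via a sequence $(x_n)$ in $\overline{B}(z,r)$ with $|F(x_n)|<1/n$ and $\text{dist}(x_n,\text{zer}\;F)\ge\eps$, extracts a limit point $\hat{x}$ by compactness, and shows $F(\hat{x})=0$ by continuity, contradicting the distance bound. Both proofs use exactly the same ingredients (properness gives compactness of the closed ball; continuity of $F$ yields closedness of $\text{zer}\;F$ and controls the limit/minimum), so the difference is stylistic: your version produces an explicit, if noneffective, formula for the modulus, while the paper's sequential argument is slightly shorter to write.

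One small correction is needed in your final parenthetical. You claim that if $|F|\equiv+\infty$ on $S_\eps$ then ``$S_\eps$ must be empty by continuity of $F$ at any zero.'' This is not true in a general proper metric space: take $X=\{0,2\}\subset\R$ with the induced metric, $F(0)=0$, $F(2)=+\infty$, $z=0$, $r=3$, $\eps=1$; then $F$ is continuous into $\overline{\R}$, $S_\eps=\{2\}\ne\emptyset$, and $|F|\equiv+\infty$ there. The fix is immediate and fits your scheme: if $\inf_{S_\eps}|F|=+\infty$, set $\phi(\eps):=1$ (or any positive number), since then no $x\in\overline{B}(z,r)$ with $|F(x)|<\phi(\eps)$ can belong to $S_\eps$, and the defining implication holds vacuously on $S_\eps$. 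This merges cleanly with your convention for $S_\eps=\emptyset$.
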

\begin{proof}  
It is enough to prove that 
\[ \forall \eps >0 \,\exists n\in\N \setminus \{0\} \,\forall x\in \overline{B}(z,r) \left(\left|F(x)\right|< \frac{1}{n} \to \exists q\in \text{zer}\; F \,\left(d(x,q)<\eps\right)\right).\]
Assume that this is not the case. Then there exist $\eps > 0$ and a sequence $(x_n)$ in $\overline{B}(z,r)$ such that 
\begin{equation}\label{prop-existence-of-modulus-eq1}
\forall n\in\N \setminus \{0\}  \, \left(\left|F(x_n )\right|< \frac{1}{n} \wedge \forall q\in \text{zer}\; F  \,\left(d(x_n,q)\ge \eps\right)\right).
\end{equation}
Let $\widehat{x}$ be a limit point of $(x_n).$ Then, using the continuity 
of $F,$ we get $F(\widehat{x}) =0,$ i.e. $\widehat{x}\in \text{zer}\; F .$ 
Also \[ \exists n\in\N \, \left(d(x_n,\widehat{x})<\eps\right).\]
Putting $q:=\widehat{x},$ this contradicts the last conjunct in \eqref{prop-existence-of-modulus-eq1}. 
\end{proof}

\begin{re}\label{rmk-existence-modulus}
From the above, it follows that if $X$ is compact and $F$ is continuous, then $F$ has a modulus of regularity w.r.t.  $\text{zer}\; F$.
\end{re}
\begin{re} The proof of Proposition \ref {existence-of-modulus} is 
noneffective and in general even for simple computable functions $F$ there 
is no computable modulus of regularity (see Remark \ref{fn-noncomp} below). 
A characterization 
of the proof-theoretic strength and the computability-theoretic status 
of Proposition \ref {existence-of-modulus} in terms of `reverse mathematics' 
and Weihrauch complexity is given in \cite{Koh18}.
\end{re}
 
The notion of modulus of regularity  appears in a natural way in different relevant problems such as the following ones.

\subsection*{Equilibrium problems}
Given the nonempty subsets $C$ and $D$ of two Hilbert spaces $H_1$ and $H_2$, respectively, and a mapping $G: C\times D\to \R$, the equilibrium problem associated to the mapping $G$ and the sets $C$ and $D$ consists of finding an element $p \in C$ such that
\begin{equation}\label{def-eq}
G(p,y)\geq  0,
\end{equation}
for all $y \in D$.

Suppose that the set of solutions for problem \eqref{def-eq}, denoted by $\text{EP}(G,C,D)$, is nonempty and define $F : C \to \R$,
$$F(x) :=  \min\left\{0,\inf_{y\in D} G(x,y)\right\}.$$  
Note that $\text{zer}\; F = \text{EP}(G,C,D).$

Let $z \in \text{EP}(G,C,D)$ and $r > 0$. A {\it modulus of regularity} for $G$ w.r.t. $\text{EP}(G,C,D)$ and $\overline{B}(z,r)$ is a modulus of regularity for $F$ w.r.t. $\text{zer}\; F$ and $\overline{B}(z,r)$. This modulus appears, under the name of error bound, in the study of parametric inequality systems. In \cite{LMNP}, such an approach is used to obtain rates of convergence for the cyclic projection method employed in solving convex feasibility problems.

The equilibrium problem covers in particular the classical variational inequality problem. Given a nonempty, closed and convex subset $C$ of a Hilbert space $H$ and a mapping $A: C\to H$, the classical variational inequality problem associated to $A$ and $C$ consists of finding an element $z \in C$ such that
\begin{equation}\label{def-vi}
\langle A(z), y-z\rangle \ge 0,
\end{equation}
for all $y \in C$.
Denote by $\text{VI}(A,C)$ the set of solutions for problem \eqref{def-vi} and assume that it is nonempty. In this case one considers $G : C \times C \to \R$ defined by $G(x,y) := \langle A(x), y-x \rangle$ and, for $z \in \text{VI}(A,C)$ and $\overline{B}(z,r)$, a {\it modulus of regularity} for $A$ w.r.t. $\text{VI}(A,C)$ and $\overline{B}(z,r)$ is a modulus of regularity for $G$ w.r.t. $\text{EP}(G,C,C)$ and $\overline{B}(z,r)$.

\subsection*{Fixed point problems} 
Let $(X,d)$ be a metric space, $T : X \to X$ with $\text{Fix}\; T \ne \emptyset$ and define $F : X \to \R$ by $F(x) := d(x,Tx)$. Note that  $\text{zer}\; F = \text{Fix}\; T$.

Let $z \in \text{Fix}\; T$ and $r > 0$. A {\it modulus of regularity} for $T$ w.r.t. $\text{Fix}\; T$ and $\overline{B}(z,r)$ is a modulus of regularity for $F$ w.r.t. $\text{zer}\; F$ and $\overline{B}(z,r)$. In a similar way, a {\it modulus of regularity} for $T$ w.r.t. $\text{Fix}\; T$ is defined to be a modulus of regularity for $F$ w.r.t. $\text{zer}\; F$.

This concept appears in particular forms in \cite{Borwein17} and \cite{LFP14,LFP16} where it was used, respectively, to study the linear and H\"{o}lder local convergence for algorithms related to nonexpansive mappings. 

\mbox{}

The next result is  a direct consequence of Proposition \ref{existence-of-modulus} and Remark \ref{rmk-existence-modulus}.
\begin{co}\label{existence-of-modulusfp}
If $X$ is proper, $T$ is continuous, $z \in \text{Fix}\;T$ and $r > 0$, then $T$ has a modulus of regularity w.r.t. $\text{Fix}\;T$ and $\overline{B}(z,r)$. If $X$ is additionally compact, then $T$ has a modulus of regularity w.r.t. $\text{Fix}\;T$.
\end{co}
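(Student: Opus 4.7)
The plan is to reduce the corollary to Proposition \ref{existence-of-modulus} and Remark \ref{rmk-existence-modulus} by means of the auxiliary function $F(x) := d(x,Tx)$ introduced in the fixed point subsection. First I would verify the two hypotheses of Proposition \ref{existence-of-modulus} for this $F$: the set $\text{zer}\; F = \text{Fix}\; T$ is nonempty because it contains $z$, and the continuity of $T$ together with the continuity of the metric $d$ on $X\times X$ yields that $F$ is continuous as a composition/combination of continuous maps (namely $x \mapsto (x,Tx) \mapsto d(x,Tx)$).

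Given $z \in \text{Fix}\; T$ and $r > 0$, and assuming $X$ is proper, I would then invoke Proposition \ref{existence-of-modulus} applied to the continuous function $F$ above, which produces a function $\phi : (0,\infty) \to (0,\infty)$ such that for every $\varepsilon > 0$ and every $x \in \overline{B}(z,r)$,
\[
|F(x)| < \phi(\varepsilon) \;\Rightarrow\; \text{dist}(x,\text{zer}\; F) < \varepsilon.
\]
Since by the definition given in the fixed point subsection a modulus of regularity for $T$ w.r.t. $\text{Fix}\; T$ and $\overline{B}(z,r)$ is, by stipulation, a modulus of regularity for $F$ w.r.t. $\text{zer}\; F$ and $\overline{B}(z,r)$, and since $\text{zer}\; F = \text{Fix}\; T$, the function $\phi$ is the required modulus.

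For the second assertion, under the additional hypothesis that $X$ is compact, I would apply Remark \ref{rmk-existence-modulus} (instead of Proposition \ref{existence-of-modulus}) to the same continuous $F$, which furnishes a single $\phi$ that works for every $r > 0$, i.e. a modulus of regularity for $F$ w.r.t. $\text{zer}\; F$, and hence for $T$ w.r.t. $\text{Fix}\; T$. There is no real obstacle here: the whole statement is essentially a transcription of the two preceding results through the dictionary $F(x) = d(x,Tx)$, and the only small thing to check is the continuity of $F$, which is immediate.
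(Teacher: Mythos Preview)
Your proposal is correct and matches the paper's approach exactly: the paper states this corollary as a direct consequence of Proposition~\ref{existence-of-modulus} and Remark~\ref{rmk-existence-modulus} via the dictionary $F(x)=d(x,Tx)$, without giving any further details. The only thing you add is the (immediate) verification that $F$ is continuous and that $\text{zer}\,F=\text{Fix}\,T\ni z$, which is precisely what is needed to invoke those results.
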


In the following section (see Remark \ref{fn-noncomp}) we show that even simple computable firmly nonexpansive mappings $T:[0,1]\to [0,1]$ may not have a computable modulus of regularity w.r.t. $\text{Fix}\; T$. 

\mbox{}

We give next three concrete instances of moduli of regularity that are computed explicitly.
\begin{ex}\label{ex-mwu-fix}
\begin{enumerate}
\item[(i)]
Let $X$ be a complete metric space and $T : X \to X$ a contraction with constant $k \in [0,1)$. Then $\text{Fix}\; T = \{z\}$ for some $z \in X$ and it is easy to see that $\phi(\varepsilon):=(1-k)\varepsilon$ is a modulus of regularity for $T$ w.r.t. $\text{Fix}\; T$ (in fact it is even a modulus of uniqueness). Indeed, $d(x,Tx) < (1-k)\varepsilon$ yields
\[d(x,z) \le d(x,Tx) + d(Tx,Tz) < (1-k)\eps + kd(x,z),\]
hence $d(x,z) < \eps$.
\item[(ii)] 
Let $X$ be a complete metric space and $T:X \to X$ an orbital contraction with constant $k \in [0,1)$ (i.e. $d(Tx,T^2x) \le kd(x,Tx)$ for all $x \in X$). If $T$ is additionally continuous, one can show that $\phi(\varepsilon):=(1-k)\varepsilon$ is a modulus of regularity for $T$ w.r.t. $\text{Fix}\; T$. To this end let $x \in X$ with $d(x,Tx) < \phi(\eps)$ and let $n,l \in \N$. Then
\[
d(T^n x, T^{n+l}x) \le \sum_{i=0}^{l-1} d(T^{n+i}x,T^{n+i+1}x) \le \sum_{i=0}^{l-1}k^{n+i}d(x,Tx) \le \frac{k^n}{1-k}d(x,Tx).
\]
This shows that $(T^n x)$ is a Cauchy sequence, hence it converges to some $z \in X$. Note that since $T$ is continuous, $z \in \text{Fix}\; T$. Moreover, $d(Tx,z) \le \frac{k}{1-k}d(x,Tx)$ and so
\[\text{dist}(x, \text{Fix}\; T) \le d(x,z) \le d(x,Tx) + d(Tx,z) \le \frac{1}{1-k}d(x,Tx) < \eps.\]

We include below an example of a continuous orbital contraction which has more than one fixed point and refer to \cite{PetRus18} for a more detailed discussion on orbital contractions.

Let $X = \{(x,y) \in \R^2: 0 \le x \le 1, 0 \le y \le 1-x\}$ with the usual Euclidean distance. Define $f : X \to X$ by  
\[f(x,y) = \left(x,\frac{y+1-x}{2}\right).\]
Then $f$ is continuous, $\|f^2(x,y) - f(x,y)\| = \|f(x,y)-(x,y)\|/2$ for all $(x,y) \in X$ and $\text{Fix}\; T = \{(x,1-x):x\in[0,1]\}$.
\item[(iii)]
Let $X$ be a metric space and $T:X\to C\subseteq X$ be a retraction. Then $\phi(\varepsilon):=\varepsilon$ 
is a modulus of regularity for $T$ w.r.t. $\text{Fix}\; T$. To see this, note that 
$\text{Fix}\; T=T(X)=C$ and $d(x,Tx)<\varepsilon$ implies $\text{dist}(x,\text{Fix}\; T)<\varepsilon$ since $Tx\in \text{Fix}\; T$ (not even the continuity of $T$ is needed for this). In particular, this applies to the case where $T$ is the metric projection of $X$ onto $C$ if the metric projection exists as a single-valued function.
\item[(iv)]
For nonempty, closed and convex subsets $C_1,C_2\subseteq \R^n$ consider 
\[ T:=R_{N_{C_2}}R_{N_{C_1}}. \]
In \cite[p. 18]{Borwein17} it is shown that if $C_1,C_2$ are convex 
semi-algebraic sets with $O \in C_1\cap C_2$ which can be described 
by polynomials on $\R^n$ of degree greater than $1$, then (in our terminology), given $r > 0$,
$T$ admits the following modulus of regularity w.r.t. $\text{Fix}\; T$ and $\overline{B}(O, r)$
\[\phi(\varepsilon):=2(\varepsilon/\mu)^{\gamma},\] 
for suitable $\mu>0$ and $\gamma\ge 1$.
\end{enumerate} 
\end{ex}

\subsection*{Minimization problems} 
Let $(X,d)$ metric space and $f : X \to (-\infty,\infty]$. We consider the problem
\begin{equation}\label{def-pb-min}
\argmin_{x\in X }f(x).
\end{equation}
Suppose that its set of solutions $S$ is nonempty and denote $m:=\min_{x\in X }f(x)$. Define the function $F : X \to \overline{\R}$, $F(x) := f(x) - m$. Note that $\text{zer}\; F = S$ and $F(x) = \infty$ for $x \notin \text{dom}\; f$.

Given $z \in S$ and $r > 0$, a {\it modulus of regularity} for $f$ w.r.t. $S$ and $\overline{B}(z,r)$ is a modulus of regularity for $F$ w.r.t. $\text{zer}\; F$ and $\overline{B}(z,r)$. Similarly, a {\it modulus of regularity} for $f$ w.r.t. $S$ is a modulus of regularity for $F$ w.r.t. $\text{zer}\; F$. This concept is closely related to growth conditions for the function $f$ such as the notions of sets of weak sharp minima or error bounds (see, e.g., \cite{BF93,Fer91,BD02,Bol17,LMWY} with the remark that there is a vast literature on these topics and their connection to other regularity properties). These conditions are especially used to analyze the linear convergence or the finite termination of central algorithms in optimization.

\mbox{}

In the following $S$ stands as above for the set of solutions of problem \eqref{def-pb-min}.
\begin{ex} \label{ex-min-pb}
\begin{itemize}
\item[(i)] The set $S$ is called a set of {\it $\psi$-global weak sharp minima} for $f$ if 
\begin{equation}\label{ineq-weak-sharp}
f(x) \ge m + \psi(\text{dist}(x,S)),
\end{equation}
for all $x \in X$, where $\psi :[0,\infty) \to [0,\infty)$ is a strictly increasing function satisfying $\psi(0) =0$. In this case, $\phi : (0,\infty) \to (0,\infty)$, $\phi(\eps):=\psi(\eps)$, acts as a modulus of regularity for $f$ w.r.t. $S$. The case $\psi(\eps) =k\,\eps$ with $k >0$ was introduced in \cite{BF93}. 

\item[(ii)] More generally, one can assume that $S$ is a set of {\it $\psi$-boundedly weak sharp minima} for $f$, that is, for any bounded set $C \subseteq X$ with $C \cap S \ne \emptyset$, there exists a strictly increasing function $\psi = \psi_C : [0,\infty) \to [0,\infty)$ satisfying $\psi(0) = 0$ such that \eqref{ineq-weak-sharp} holds for all $x \in C$. Fixing $z \in S$ and $r > 0$, a modulus of regularity for $f$ w.r.t. $S$ and $\overline{B}(z,r)$ can be defined by $\phi : (0,\infty) \to (0,\infty)$, $\phi(\eps) := \psi_C(\eps)$, where $C := \overline{B}(z,r)$. 
\end{itemize}
\end{ex}

\begin{re}
In this regard, if $\omega$ is an increasing function satisfying $\omega(0) = 0$, an inequality of the form 
\[\omega(f(x)-m) \ge \text{dist}(x,S),\] 
where $x$ either lives in $X$ or in a bounded set, is also called an {\it error bound}.
\end{re}

\begin{re}
At the same time, if $\phi$ is a modulus of regularity for $f$ w.r.t. $S$ and $\overline{B}(z,r)$, then $f(x) \ge m + \phi(\text{dist}(x,S))$, for all $x \in \overline{B}(z,r)$. Indeed, supposing that there exists $x \in \overline{B}(z,r)$ such that $f(x) -m < \phi(\text{dist}(x,S))$, then $\text{dist}(x,S) < \text{dist}(x,S)$, a contradiction. Thus, a modulus of regularity also induces a growth condition for the function $f$.
\end{re}

\subsection*{Zeros of set-valued operators}
Let $X$ and $Y$ be normed spaces and $A : X \rightarrow 2^Y$ be a set-valued operator such that $\text{zer}\; A \ne \emptyset$ and
\begin{equation}\label{eq-cond-oper}
{\rm dist}(O_Y, A(x)) = 0 \; \Rightarrow \; x \in \text{zer}\; A,
\end{equation}
for all $x \in X$. If $F : X \to \overline{\R}$ is defined by $F(x) := \text{dist}(O_Y,A(x))$, then $\text{zer}\; F = \text{zer}\; A$ and $F(x) = \infty$ for $x \notin \text{dom}\; A$. Note that if $H$ is a Hilbert space and $A : H \rightarrow 2^H$ is maximal monotone, then $A(x)$ is closed for all $x \in H$, so \eqref{eq-cond-oper} holds.

Given $z \in \text{zer}\; A$ and $r > 0$, a {\it modulus of regularity} for $A$ w.r.t. $\text{zer}\; A$ and $\overline{B}(z,r)$ is a modulus of regularity for $F$ w.r.t. $\text{zer}\; F$ and $\overline{B}(z,r)$.  Similarly, a {\it modulus of regularity} for $A$ w.r.t. $\text{zer}\; A$ is a modulus of regularity for $F$ w.r.t. $\text{zer}\; F$. We give next two instances when moduli of regularity for $A$ w.r.t. $\text{zer}\; A$ exist.

\begin{ex} \label{ex-zero-op}
\begin{itemize}
\item[(i)]  Let $X$ be a Banach space and  $X^* $ its dual. The normalized duality mapping $J : X \to 2^{X^*}$ is defined as 
\[J(x) =\{ j\in X^* : j(x) = \Vert x \Vert ^2 , \Vert j\Vert =\Vert x\Vert \}.\]

An operator $A:X\to  2^X $ is called {\it $\psi$-strongly accretive}, where $\psi :[0,\infty )\to [0,\infty )$ is a strictly increasing function with $\psi(0)=0$, if 
\begin{equation}\label{def-strongly-accretive}
\langle x^*-y^*, x-y\rangle_+ \geq \psi(\Vert x-y\Vert) \Vert x-y\Vert,
\end{equation}
for all $x,y\in X$, $x^* \in A(x)$, $y^* \in A(y)$, where $\langle  v, u \rangle_+  = \max \{ j(v) : j\in J(u)\}$. 
 
Assume that $\text{zer}\; A \ne \emptyset$ (hence it is a singleton) and let $x \in X, x \notin \text{zer}\; A$. Taking in \eqref{def-strongly-accretive} $y \in \text{zer}\; A$, we obtain 
\[\|x^*\| \ge \frac{\langle x^*,x-y\rangle_+}{\|x-y\|} \ge\psi(\Vert x-y\Vert),\]
for all $x^* \in A(x)$. Then it is clear that $A$ satisfies \eqref{eq-cond-oper} and $\phi : (0,\infty) \to (0,\infty)$, $\phi(\eps) := \psi(\eps)$, is a modulus of regularity for $A$ w.r.t. $\text{zer}\; A$. Furthermore, it is actually a modulus of uniqueness, a fact that was also observed in \cite[Remark 2]{KK15}. If $A$ is single-valued, then for any $\gamma > 0$, $\gamma\phi$ is a modulus of regularity for $\text{Id} - \gamma A$ w.r.t. $\text{Fix}\left(\text{Id} - \gamma A\right)$.

\item[(ii)] Metric subregularity has been extensively used in optimization in relation with stability problems and the linear local convergence of proximal point methods (see \cite{DR09,Le09}). An operator $A : X \rightarrow 2^Y$ is called {\it metrically subregular} at $z \in \text{zer}\; A$ for $O_Y$ if there exist $k,r > 0$ such that
$$\text{dist}(x,\text{zer}\;A) \le k \text{dist}(O_Y,A(x)),$$
for all $x \in \overline{B}(z,r)$. In this case, if $\text{zer}\; A$ is closed, \eqref{eq-cond-oper} holds and it immediately follows that $\phi : (0,\infty) \to (0,\infty)$, $\phi(\eps) := \eps/k$, is a modulus of regularity for $A$ w.r.t. $\text{zer}\;A$ and $\overline{B}(z,r)$. 
\end{itemize}
\end{ex}

\mbox{}

Recall that if $(X_1, d_1)$ and $(X_2, d_2)$ are metric spaces, a {\it modulus of uniform continuity} for a uniformly continuous mapping $T : X_1 \to X_2$ is a function $\rho:(0,\infty)\to (0,\infty)$ satisfying
$$\forall\eps>0 \, \forall x,y \in X_1 \, \left( d_1(x,y) <\rho(\eps) \rightarrow d_2(Tx,Ty) <\eps\right).$$

\begin{thm}\label{thm-equiv-mod}
Let $H$ be a Hilbert space and $f:H \to (-\infty,\infty]$ a proper, convex and lower semi-continuous function which attains its minimum. Take $z \in \argmin f$ and $r,r' > 0$. Consider the following statements:
\begin{itemize}
\item[1.] The function $f$ admits a modulus of regularity w.r.t. $\argmin f$ and $\overline{B}(z,r)$.
\item[2.] For $\gamma > 0$, the resolvent of $f$, $J_{\gamma \partial f}$, admits a modulus of regularity w.r.t. ${\rm Fix}\; J_{\gamma \partial f}$ and $\overline{B}(z,r)$.
\item[3.] The subdifferential of $f$, $\partial f$, admits a modulus of regularity w.r.t. ${\rm zer}\; \partial f$ and $\overline{B}(z,r')$.
\end{itemize}
Then 
\begin{itemize} 
\item[(i)] If $\left.f\right|_{\overline{B}(z,r+1)}$ is additionally uniformly continuous admitting a modulus of uniform continuity, then $1$ implies $2$ for all $\gamma > 0$.
\item[(ii)] If there exists $\gamma > 0$ such that $2$ holds, then $1$ is satisfied. Moreover, $3$ holds too if $r' < r$.
\item[(iii)] If $\partial f$ is single-valued, $r'=r$ and $\left.\left({\rm Id}+\gamma\partial f\right)\right|_{\overline{B}(z,r+1)}$, $\gamma > 0$, is uniformly continuous admitting a modulus of uniform continuity, then $3$ implies $2$.
\end{itemize}
\end{thm}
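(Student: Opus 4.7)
The plan rests on the identities $\argmin f = \text{Fix}\, J_{\gamma\partial f} = \text{zer}\, \partial f$ and on the resolvent inequality \eqref{ineq-res}. Setting $y=x$ in \eqref{ineq-res} yields
\[
\|J_{\gamma\partial f}(x) - x\|^2 \le \gamma\bigl(f(x) - \min f\bigr),
\]
which is the workhorse for the easy direction. I will also use the characterization $y = J_{\gamma\partial f}(x) \Longleftrightarrow (x-y)/\gamma \in \partial f(y)$, and the fact that $J_{\gamma\partial f}$ is nonexpansive, so it maps $\overline{B}(z,r)$ into itself since $z$ is a fixed point.

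For (ii), the implication $2 \Rightarrow 1$ is immediate from the displayed inequality: if $\psi$ is a modulus of regularity for $J_{\gamma\partial f}$ w.r.t.\ $\overline{B}(z,r)$, then $\phi(\eps):=\psi(\eps)^2/\gamma$ works for $f$, since $f(x)-\min f < \phi(\eps)$ forces $\|J_{\gamma\partial f}(x)-x\| < \psi(\eps)$, hence $\text{dist}(x,\argmin f)<\eps$. For $2 \Rightarrow 3$, given $x \in \overline{B}(z,r')$ and an $x^* \in \partial f(x)$ of norm close to $\text{dist}(O,\partial f(x))$, set $y := x + \gamma x^*$; then $J_{\gamma\partial f}(y) = x$ and $\|y - J_{\gamma\partial f}(y)\| = \gamma\|x^*\|$. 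The condition $\|x^*\| \le (r-r')/\gamma$ keeps $y$ inside $\overline{B}(z,r)$---and this is exactly where the strict inequality $r' < r$ is used; applying $\psi$ to $y$ yields $\text{dist}(y,\text{Fix}\, J_{\gamma\partial f}) < \eps/2$, and the triangle inequality then gives $\text{dist}(x,\text{zer}\,\partial f) \le \gamma\|x^*\| + \eps/2 < \eps$ once $\|x^*\| < \eps/(2\gamma)$ as well.

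For (iii), single-valuedness of $\partial f$ together with the resolvent characterization give $(\text{Id}+\gamma\partial f)(J_{\gamma\partial f}(x)) = x$ for every $x \in H$. If $\rho$ is a modulus of uniform continuity for $\text{Id}+\gamma\partial f$ on $\overline{B}(z,r+1)$ (which contains both $x$ and $J_{\gamma\partial f}(x)$ by nonexpansiveness), then $\|x - J_{\gamma\partial f}(x)\| < \rho(\eta)$ implies $\gamma\|\partial f(x)\| = \|(\text{Id}+\gamma\partial f)(x) - x\| < \eta$. Applying the modulus $\phi$ from $3$ to $x \in \overline{B}(z,r)=\overline{B}(z,r')$ then produces $\text{dist}(x,\text{zer}\,\partial f) < \eps$ whenever $\eta < \gamma\phi(\eps)$, so $\psi(\eps) := \rho(\gamma\phi(\eps))$ is a modulus for $J_{\gamma\partial f}$.

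Part (i) is the subtlest and is where the uniform continuity of $f|_{\overline{B}(z,r+1)}$ enters. Split $f(x) - \min f = (f(x) - f(J_{\gamma\partial f}(x))) + (f(J_{\gamma\partial f}(x)) - \min f)$; the modulus $\rho$ of uniform continuity of $f$ controls the first summand. For the second, combining \eqref{ineq-res} with $y = z$ and the parallelogram identity yields
\[
f(J_{\gamma\partial f}(x)) - \min f \le \tfrac{1}{\gamma}\bigl\langle x - J_{\gamma\partial f}(x),\, J_{\gamma\partial f}(x) - z\bigr\rangle \le \tfrac{r}{\gamma}\|x - J_{\gamma\partial f}(x)\|,
\]
using nonexpansiveness for the last step. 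Forcing each summand below $\phi(\eps)/2$ produces a modulus $\psi(\eps) := \min\{\rho(\phi(\eps)/2),\, \gamma\phi(\eps)/(2r)\}$ for $J_{\gamma\partial f}$. The main obstacle is precisely this: \eqref{ineq-res} supplies only a lower bound on $f(x) - f(J_{\gamma\partial f}(x))$, so the opposite (upper) bound has no purely algebraic source and must be delivered by the extra uniform continuity hypothesis.
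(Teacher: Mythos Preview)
Your proof is correct and follows essentially the same approach as the paper's: the same splitting $f(x)-m = (f(x)-f(J_{\gamma\partial f}(x)))+(f(J_{\gamma\partial f}(x))-m)$ in (i), the same resolvent/Prox estimates and the same construction $y=x+\gamma x^*$ in (ii), and the same identity $(\text{Id}+\gamma\partial f)(J_{\gamma\partial f}(x))=x$ in (iii). Your use of the polarization identity in (i) and of $y=x$ in \eqref{ineq-res} for (ii) are in fact slightly cleaner than the paper's chain of inequalities and yield marginally sharper constants, and your omission of the auxiliary $\min\{\cdot,1\}$ in the moduli is harmless since nonexpansiveness already forces $J_{\gamma\partial f}(x)\in\overline{B}(z,r)$.
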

\begin{proof}
Recall first that 
$$\argmin \; f = \text{Fix} \; J_{\gamma \partial f} = \text{zer}\; \partial f,$$ 
for every $\gamma > 0$. 

(i) Let $\phi$ be a modulus of regularity for $f$ w.r.t. $\argmin f$ and $\overline{B}(z,r)$, and $\rho$ a modulus of uniform continuity for $\left.f\right|_{\overline{B}(z,r+1)}$. Fix $\gamma > 0$. Define $\phi^* : (0,\infty) \to (0,\infty)$ by 
$$\phi^*(\eps):=\min\left\{\rho\left(\frac{\phi(\eps)}{2}\right), \frac{\gamma\phi(\eps)}{2r}, 1\right\}.$$
To see that $\phi^*$ is a modulus of regularity for $J_{\gamma \partial f}$ w.r.t. $\text{Fix} \; J_{\gamma \partial f}$ and $\overline{B}(z,r)$, let $\eps > 0$ and $x \in \overline{B}(z,r)$. Assume $\|x - J_{\gamma \partial f}(x)\| < \phi^*(\eps)$. Then $J_{\gamma \partial f}(x) \in \overline{B}(z,r+1)$, so $f(x) - f(J_{\gamma \partial f}(x)) < \phi(\eps)/2$. At the same time, since $z \in \argmin \; f$, by \eqref{ineq-res},
\[f(J_{\gamma \partial f}(x)) - m \le \frac{1}{2\gamma}\left(\|z-x\|^2 - \|J_{\gamma \partial f}(x) - x\|^2 - \|J_{\gamma \partial f}(x) - z\|^2\right).\]
Because
\[\|z-x\|^2 - 2\|z-x\|\|J_{\gamma \partial f}(x) - x\| + \|J_{\gamma \partial f}(x) - x\|^2  = \left(\|z-x\| - \|J_{\gamma \partial f}(x) - x\|\right)^2 \le \|J_{\gamma \partial f}(x) - z\|^2,\]
we have
\begin{align*}
&\|z-x\|^2 - \|J_{\gamma \partial f}(x) - x\|^2 - \|J_{\gamma \partial f}(x) - z\|^2 \le 2 \|J_{\gamma \partial f}(x) - x\|\left(\|z-x\| -  \|J_{\gamma \partial f}(x) - x\|\right)\\
& \quad \le 2 \|J_{\gamma \partial f}(x) - x\|\|J_{\gamma \partial f}(x) - z\| = 2\|J_{\gamma \partial f}(x) - x\|\|J_{\gamma \partial f}(x) - J_{\gamma \partial f}(z)\| \\
& \quad \le 2\|J_{\gamma \partial f}(x) - x\|\|x - z\| < \gamma \phi(\eps).
\end{align*}
Thus, $f(J_{\gamma \partial f}(x)) - m < \phi(\eps)/2$, from where
\[f(x) - m = f(x) - f(J_{\gamma \partial f}(x)) + f(J_{\gamma \partial f}(x)) - m < \phi(\eps).\]
Consequently, $\text{dist}(x,\text{Fix}\; J_{\gamma \partial f}) < \eps$.

(ii) Let $\gamma > 0$ and $\phi$ be a modulus of regularity for $J_{\gamma \partial f} = \text{Prox}_{\gamma f}$ w.r.t. $\text{Fix} \; J_{\gamma \partial f}$ and $\overline{B}(z,r)$. 

We prove first that $\phi^* : (0,\infty) \to (0,\infty)$, 
\[\phi^*(\eps):=\frac{\phi(\eps)^2}{2\gamma},\]
is a modulus of regularity for $f$ w.r.t. $\argmin f$ and $\overline{B}(z,r)$.

To see this, let $\eps>0$ and $x \in \overline{B}(z,r)$ such that $f(x) - m < \phi^*(\eps)$.
Since
$$f(J_{\gamma \partial f}(x)) + \frac{1}{2\gamma}\|J_{\gamma \partial f}(x)-x\|^2 \le f(x),$$
we get
$$\|J_{\gamma \partial f}(x)-x\|^2 \le  2\gamma (f(x)- f(J_{\gamma \partial f}(x))) \le 2\gamma(f(x)-m).$$
Thus, $\|J_{\gamma \partial f}(x)-x\| < \phi(\eps)$, which yields $\text{dist}(x,\argmin f) < \eps$.

Define now $\phi^* : (0,\infty) \to (0,\infty)$ by 
$$\phi^*(\eps):=\frac{1}{\gamma}\min\left\{\phi\left(\frac{\eps}{2}\right),\frac{\eps}{2}, r-r'\right\}.$$
We show that $\phi^*$ is a modulus of regularity for $\partial f$ w.r.t. $\text{zer}\;\partial f$ and $\overline{B}(z,r')$.

Let $\eps>0$ and $x \in \overline{B}(z,r')$. Suppose $\text{dist}(O_Y,\partial f(x)) < \phi^*(\eps)$ and choose $y \in \partial f(x)$ such that $\|y\| < \phi^*(\eps)$. Then $x + \gamma y \in (\text{Id} + \gamma \partial f)(x)$, so $J_{\gamma\partial f}(x+\gamma y) = x$, 
\[\|x+\gamma y - z\| \le \|x - z\| + \gamma\|y\| \le r' + r - r' = r,\]
and
\[\|J_{\gamma\partial f}(x+\gamma y) - (x+\gamma y)\| = \gamma \|y\| < \phi\left(\frac{\eps}{2}\right).\] 
It follows that $\text{dist}(x+\gamma y,\text{zer}\; \partial f) < \eps/2$, hence
\[\text{dist}(x,\text{zer}\; \partial f) \le \text{dist}(x+\gamma y,\text{zer}\; \partial f) + \gamma\|y\| < \eps.\]

(iii) Note that in this case $\text{Id} + \gamma \partial f : H \to H$ is also bijective (see \cite[Chapter 23]{BC17}). Let $\rho$ be a modulus of uniform continuity for $\left.\left({\rm Id}+\gamma\partial f\right)\right|_{\overline{B}(z,r+1)}$ and $\phi$ a modulus of regularity for $\partial f$ w.r.t. $\text{zer}\;\partial f$ and $\overline{B}(z,r)$, and define $\phi^* : (0,\infty) \to (0,\infty)$ by 
$$\phi^*(\eps):=\min\left\{\rho(\gamma \phi(\eps)),1\right\}.$$
Let $\eps>0$ and $x \in \overline{B}(z,r)$ such that $\|J_{\gamma \partial f}(x)-x\| < \phi^*(\eps)$. Note that $J_{\gamma \partial f}(x) \in \overline{B}(z,r+1)$. Because
$$
\|\partial f(x)\| = \frac{1}{\gamma}\|\gamma \partial f(x)\| = \frac{1}{\gamma}\|(\text{Id} + \gamma \partial f)(J_{\gamma \partial f}(x)) - (\text{Id} + \gamma \partial f)(x)\| <\phi(\eps),
$$
it follows that $\text{dist}(x,\text{Fix}\; J_{\gamma \partial f}) < \eps$.
\end{proof}

\section{Rates of convergence} \label{section-rates}

\begin{thm}\label{thm-gen-rate}
Let $(X,d)$ be a metric space and $F : X \to \overline\R$ with ${\rm zer}\; F \ne \emptyset$. Suppose that $(x_n)$ is a sequence in $X$ which is Fej\'{e}r monotone w.r.t. ${\rm zer}\; F$, $b > 0$ is an upper bound on $d(x_0,z)$ for some $z \in {\rm zer}\; F$ and there exists $\alpha : (0,\infty) \to \N$ such that
\[\forall \eps > 0 \, \exists n \le \alpha(\eps) \, \left(|F(x_n)| < \eps\right).\]
If $\phi$ is a modulus of regularity for $F$ w.r.t. ${\rm zer}\; F$ and $\overline{B}(z,b)$, then $(x_n)$ is a Cauchy sequence with Cauchy modulus 
\begin{equation}\label{thm-gen-rate-eq1}
\forall \varepsilon>0 \, \forall n,\tilde{n}\ge \alpha(\phi(\eps/2)) \, \left(d(x_n,x_{\tilde{n}})<\varepsilon\right) 
\end{equation}
and 
\begin{equation}\label{thm-gen-rate-eq2} 
\forall \varepsilon>0 \, \forall n\ge \alpha(\phi(\eps)) \, \left( {\rm dist}(x_n,{\rm zer}\; F)<\eps\right). 
\end{equation}
Moreover, 
\begin{itemize}
\item[(i)] if $X$ is complete and ${\rm zer}\; F$ is closed, then $(x_n)$ converges to a zero of $F$ with a rate of convergence $\alpha(\phi(\varepsilon/2))$;
\item[(ii)] if there exists $\eps^* > 0$ such that
\begin{equation}\label{thm-gen-rate-eq3} 
\forall w \in \R, \, |w| < \eps^* \, \left( F^{-1}(w) \subseteq {\rm zer}\; F \cup \{x \in X : {\rm dist}(x,{\rm zer}\; F)\ge\eps^*\}\right), 
\end{equation}
then $x_n = z'$ for all $n \ge \alpha\left(\min\{\eps^*,\phi(\eps^*)\}\right)$, where $z' \in {\rm zer}\; F$.
\end{itemize}
\end{thm}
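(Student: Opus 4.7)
The plan is to work through the four claims in sequence, with each step relying on the previous one together with Fejér monotonicity. The key observation throughout is that by Fejér monotonicity with respect to $z$, $d(x_n,z)\le d(x_0,z)\le b$, so the entire sequence $(x_n)$ lies in $\overline{B}(z,b)$, i.e.\ inside the ball where the modulus of regularity is applicable.

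First I would prove \eqref{thm-gen-rate-eq2}. Given $\varepsilon>0$, the approximate-zero hypothesis yields some $n_0\le\alpha(\phi(\varepsilon))$ with $|F(x_{n_0})|<\phi(\varepsilon)$. Since $x_{n_0}\in\overline{B}(z,b)$, the modulus of regularity gives $\mathrm{dist}(x_{n_0},\mathrm{zer}\,F)<\varepsilon$. To extend this bound to every $n\ge\alpha(\phi(\varepsilon))$ (hence to $n\ge n_0$), I would use Fejér monotonicity with respect to an arbitrary $p\in\mathrm{zer}\,F$: $d(x_n,p)\le d(x_{n_0},p)$, then take the infimum over $p$.

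Next, for the Cauchy modulus \eqref{thm-gen-rate-eq1}, I would apply \eqref{thm-gen-rate-eq2} at $\varepsilon/2$: setting $N:=\alpha(\phi(\varepsilon/2))$, we have $\mathrm{dist}(x_N,\mathrm{zer}\,F)<\varepsilon/2$, so pick a single point $p\in\mathrm{zer}\,F$ with $d(x_N,p)<\varepsilon/2$. Fejér monotonicity w.r.t.\ this $p$ gives $d(x_n,p)\le d(x_N,p)<\varepsilon/2$ for every $n\ge N$, so the triangle inequality yields $d(x_n,x_{\tilde n})<\varepsilon$ for $n,\tilde n\ge N$. Part~(i) then follows immediately: completeness turns the Cauchy modulus into convergence to some $x^*\in X$; taking $\tilde n\to\infty$ in \eqref{thm-gen-rate-eq2} and using that $\mathrm{zer}\,F$ is closed shows $x^*\in\mathrm{zer}\,F$; the rate is inherited directly from the Cauchy modulus.

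Finally, for the finite-termination statement (ii), let $N:=\alpha(\min\{\varepsilon^*,\phi(\varepsilon^*)\})$. The hypothesis supplies $n_0\le N$ with $|F(x_{n_0})|<\min\{\varepsilon^*,\phi(\varepsilon^*)\}$. In particular $|F(x_{n_0})|<\phi(\varepsilon^*)$, so the modulus of regularity yields $\mathrm{dist}(x_{n_0},\mathrm{zer}\,F)<\varepsilon^*$. Now apply \eqref{thm-gen-rate-eq3} with $w:=F(x_{n_0})$: since $|w|<\varepsilon^*$, the point $x_{n_0}$ lies either in $\mathrm{zer}\,F$ or at distance $\ge\varepsilon^*$ from it, and the second alternative is ruled out. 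Thus $z':=x_{n_0}\in\mathrm{zer}\,F$, and Fejér monotonicity with respect to $z'$ forces $d(x_n,z')\le d(x_{n_0},z')=0$ for all $n\ge n_0$, hence $x_n=z'$ for all $n\ge N$. I do not expect any genuine obstacle — the proof is essentially a careful bookkeeping exercise, the only subtle point being in step~3, where one must pick a \emph{single} $p\in\mathrm{zer}\,F$ close to $x_N$ before invoking Fejér monotonicity, rather than working with the set-distance directly.
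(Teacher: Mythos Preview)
Your argument is correct and follows essentially the same route as the paper's proof; the only difference is cosmetic ordering (you establish \eqref{thm-gen-rate-eq2} first and then derive \eqref{thm-gen-rate-eq1} from it, whereas the paper handles both simultaneously by working at scale $\varepsilon/2$). One small slip: in part~(i) you write ``taking $\tilde n\to\infty$ in \eqref{thm-gen-rate-eq2}'', but \eqref{thm-gen-rate-eq2} has no $\tilde n$ --- you presumably mean to let $\tilde n\to\infty$ in \eqref{thm-gen-rate-eq1} to obtain the rate, and separately let $n\to\infty$ in \eqref{thm-gen-rate-eq2} (using continuity of $\mathrm{dist}(\cdot,\mathrm{zer}\,F)$ and closedness) to conclude $x^*\in\mathrm{zer}\,F$.
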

\begin{proof}
Let $\eps > 0$. Note that by Fej\'{e}r monotonicity, $(x_n) \subseteq \overline{B}(z,b)$. Since there exists $N \le \alpha(\phi(\varepsilon/2))$ such that $|F(x_N)| < \phi(\varepsilon/2)$, it follows that $\text{dist}(x_N,\text{zer}\; F) < \varepsilon/2$. Thus, $d(x_N,y) < \varepsilon/2$ for some $y \in \text{zer}\; F$. Since $(x_n)$ is Fej\'er monotone w.r.t. ${\rm zer}\; F$,  this implies that $d(x_n,y)\le d(x_N,y) < \varepsilon/2$ for all $n \ge \alpha(\phi(\eps/2))$, so \eqref{thm-gen-rate-eq1} and \eqref{thm-gen-rate-eq2} hold.

(i) If $X$ is complete, then $z' := \lim_{n\to\infty} x_n$ exists and, by the above Cauchy rate, we get that $d(x_m,z')\le \varepsilon$ for $m\geq\alpha(\phi(\eps/2))$. Hence,
\[\text{dist}(z',\text{zer}\; F)\le \text{dist}(x_m,\text{zer}\; F)+ d(z',x_m) < 3\eps/2.\] 
Since $\eps>0$ was arbitrary, we get $\text{dist}(z',\text{zer}\; F)=0$ which yields, if $\text{zer}\; F$ is closed, that $z'\in \text{zer}\; F$.

(ii) Let $N \le \alpha\left(\min\{\eps^*,\phi(\eps^*)\}\right)$ such that $|F(x_N)| < \min\{\eps^*,\phi(\eps^*)\}$. Taking $w := F(x_N)$ in \eqref{thm-gen-rate-eq3}, we obtain $x_N \in \text{zer}\; F \cup \{x \in X : {\rm dist}(x,\text{zer}\; F)\ge\eps^*\}$. However, as $|F(x_N)| < \phi(\eps^*)$, we have ${\rm dist}(x_N,\text{zer}\; F) < \eps^*$, and so $x_N = z'$ for some $z' \in \text{zer}\; F$. But then, by Fej\'er monotonicity, $d(x_{n},z') \le d(x_N,z') = 0$ for all $n \ge N$. Hence, $x_n = z'$ for all $n \ge \alpha\left(\min\{\eps^*,\phi(\eps^*)\}\right)$.
\end{proof}

\begin{re} \label{rmk-finite-conv}
If instead of \eqref{thm-gen-rate-eq3} one actually has the stronger condition
\begin{equation}\label{rmk-finite-conv-eq} 
\exists \eps^* > 0 \, \forall w \in \R, \, |w| < \eps^* \, \left( F^{-1}(w) \subseteq {\rm zer}\; F\right), 
\end{equation}
then one does not need to assume the existence of a modulus of regularity for $F$ in order to obtain the finite convergence of $(x_n)$. In this case the corresponding rate is $\alpha(\eps^*)$.
\end{re}

\begin{re}
In fact in order to obtain finite termination, condition \eqref{thm-gen-rate-eq3} does not need to hold for all points in $F^{-1}(w)$, but only for those also belonging to the set of values of $(x_n)$.
\end{re}

The following result is in some sense a converse of Theorem \ref{thm-gen-rate} for a particular situation.

\begin{pro}
Let $X$ be a metric space, $T : X \to X$ nonexpansive with $\text{Fix}\; T \ne \emptyset$, $z \in \text{Fix}\; T$ and $b>0$. If for every $x \in \overline{B}(z,b)$, $(T^n x)$ converges to a fixed point of $T$ with a common rate of convergence $\psi$, then $\phi(\eps) = \eps/(2\psi(\eps/2))$ is a modulus of regularity for $T$ w.r.t. $\text{Fix}\; T$ and $\overline{B}(z,b)$.
\end{pro}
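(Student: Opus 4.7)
The plan is to exploit the telescoping bound on the Picard iterates of a nonexpansive map: if $T$ is nonexpansive, then $d(T^k x, T^{k+1}x) \le d(x,Tx)$ for every $k$, and hence by the triangle inequality
\[
d(x, T^n x) \le \sum_{k=0}^{n-1} d(T^k x, T^{k+1}x) \le n \cdot d(x,Tx),
\]
for every $n \in \N$. This links the size of the displacement $d(x,Tx)$ to how close iterates are to the starting point.

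Fix $\eps > 0$ and take $x \in \overline{B}(z,b)$ with $d(x,Tx) < \phi(\eps) = \eps/(2\psi(\eps/2))$. I would set $n := \psi(\eps/2)$. By hypothesis the sequence $(T^k x)$ converges to some $x^\ast \in \text{Fix}\; T$, and $\psi$ is a common rate of convergence valid on $\overline{B}(z,b)$, so $d(T^n x, x^\ast) < \eps/2$. Then, combining with the telescoping estimate above,
\[
d(x, x^\ast) \le d(x, T^n x) + d(T^n x, x^\ast) < n \cdot \frac{\eps}{2\psi(\eps/2)} + \frac{\eps}{2} = \frac{\eps}{2} + \frac{\eps}{2} = \eps.
\]
Since $x^\ast \in \text{Fix}\; T$, this yields $\text{dist}(x, \text{Fix}\; T) \le d(x, x^\ast) < \eps$, which is exactly what is required for $\phi$ to be a modulus of regularity for $T$ w.r.t. $\text{Fix}\; T$ and $\overline{B}(z,b)$.

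There is essentially no obstacle: the only subtle points are (a) that the iterates stay inside the domain where $\psi$ is a valid rate, which is automatic since $T$ is nonexpansive and $z$ is a fixed point (so $(T^k x)$ is Fej\'er monotone w.r.t. $\{z\}$ and remains in $\overline{B}(z,b)$), and (b) the implicit convention that $\psi$ takes values in $\N$, so that $n = \psi(\eps/2)$ is a well-defined integer at which to cut the telescoping sum. If one preferred to avoid division by $\psi(\eps/2)$ when the rate is $0$, one can harmlessly replace $\psi(\eps/2)$ by $\max\{\psi(\eps/2),1\}$ in the definition of $\phi$ without affecting the argument.
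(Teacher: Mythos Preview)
Your proof is correct and follows essentially the same approach as the paper's: both use the telescoping estimate $d(x,T^n x)\le n\,d(x,Tx)$ (from nonexpansiveness) together with the rate of convergence at $n=\psi(\eps/2)$ to bound $\mathrm{dist}(x,\mathrm{Fix}\;T)$ by $\eps$. The only cosmetic difference is that the paper phrases the argument by contradiction, whereas you give it directly; and your remark (a) is unnecessary since the hypothesis on $\psi$ is stated for starting points $x\in\overline{B}(z,b)$, not for the iterates.
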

\begin{proof}
Assume on the contrary that $\phi$ is not a modulus of regularity for $T$ w.r.t. $\text{Fix}\; T$ and $\overline{B}(z,b)$. Then there exist $\eps >0$ and $x \in \overline{B}(z,b)$ such that $d(x,Tx)<\phi(\eps)$ and $\text{dist}(x,\text{Fix}\; T) \ge \eps$. Denote $n = \psi(\eps/2)$. Then there exists $w \in \text{Fix}\; T$ such that $d(T^n x, w) < \eps/2$, which yields
\[\text{dist}(x,\text{Fix}\; T) \le d(x,w) \le d(w,T^n x) + \sum_{i = 0}^{n-1}d(T^i x, T^{i+1}x) < \frac{\eps}{2} + n d(x,Tx) < \frac{\eps}{2} + n\phi(\eps) = \eps,\]
a contradiction.
\end{proof}

In order to apply Theorem \ref{thm-gen-rate} to obtain rates of convergence for sequences $(x_n)$ converging to common fixed points of finitely 
many selfmappings $T_1,\ldots,T_n:X\to X$ of a metric space $(X,d)$ (which e.g. is the situation for iterative procedures to solve so-called  convex feasibility problems, see below) we first define the following notion.

\begin{de}[compare \cite{BauschkeBorwein96}]
Let $(X,d)$ be a metric space and $C_1,\ldots,C_m,K$ be subsets 
of $X$ with $C:=\bigcap^{m}_{i=1} C_i\not=\emptyset.$ We say that 
$C_1,\ldots,C_m$ are {\it (uniformly) metrically regular w.r.t. $K$} if 
\[ \forall \varepsilon>0\,\exists \delta>0\,\forall x\in K \ 
\left( \bigwedge^m_{i=1} \text{dist}(x,C_i)<\delta \rightarrow \text{dist}(x,C)<\varepsilon
\right). \]
We call a function $\rho: (0,\infty )\to  (0,\infty )$ producing such a $\delta=
\rho(\varepsilon)$ a {\it modulus of metric regularity} for 
$C_1,\ldots,C_m$ w.r.t. $K$.
\end{de}
\begin{ex}[\cite{Borwein14}]\label{ex2} From a result shown in 
{\rm \cite{Borwein14}}, the following is immediate: 
let $C_1,\ldots,C_m\subseteq \R^n$ be 
basic convex semi-algebraic sets given by 
\[ C_i:=\{ x\in \R^n | g_{i,j}(x)\le 0, \,j=1,\ldots ,m_i\},\] where 
$g_{i,j}$ are convex polynomials on $\R^n$ with degree at most $d\in\N.$ 
Then $\bigcap^m_{i=1} C_i\not=\emptyset$ and for any compact $K\subseteq \R^n$ there exists $c>0$ such that 
\[ \rho(\varepsilon):= (\varepsilon/c)^{\gamma}/m,\, \mbox{with} \, 
\gamma:=\min \left\{ \frac{(2d-1)^n+1}{2},B(n-1)d^n\right\},\] where $B(n):={{n}\choose{[n/2]}},$ is a modulus of metric regularity for 
$C_1,\ldots,C_m$ w.r.t. $K.$  
\end{ex}

As an easy consequence of Theorem \ref{thm-gen-rate} we obtain the next result.
\begin{co}\label{regularity-theorem}
Let $(X,d)$ be a complete metric space and $T_1,\ldots,T_m$ be selfmappings of $X$ with $C:=\bigcap^{m}_{i=1} \text{Fix}\; T_i$ nonempty and closed. Let $(x_n)$ be a sequence in $X$ which is Fej\'er monotone w.r.t. $C$ and assume that $b>0$ is an upper bound on $d(x_0,z)$ for some $z \in C$. Suppose that $\phi$ is a common modulus of regularity for $T_i$ w.r.t. $\text{Fix}\; T_i$ and $\overline{B}(z,b)$ for each $i=1,\ldots, m$.  If $\text{Fix}\; T_1,\ldots,\text{Fix}\; T_m$ are metrically regular w.r.t. $\overline{B}(z,b)$ with modulus $\rho$ and $(x_n)$ also has common approximate fixed points for $T_1,\ldots,T_m$ with $\alpha:(0,\infty)\to \N$ a common approximate fixed point bound, i.e.  
\[ \forall \varepsilon>0\,\exists n\le \alpha(\varepsilon) \ \left( 
\bigwedge^m_{i=1} d(x_n,T_i x_n)<\varepsilon\right),\]
then 
$(x_n)$ converges to a point in $C$ with a rate of convergence $\alpha(\phi(\rho(\varepsilon/2)))$. 
\end{co}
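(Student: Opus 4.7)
The plan is to apply Theorem \ref{thm-gen-rate} to a single scalar function built out of the displacements $d(\,\cdot\,, T_i\,\cdot\,)$. First I would define $F : X \to \R$ by
\[F(x) := \max_{1 \le i \le m} d(x, T_i x),\]
so that $\text{zer}\; F = \bigcap_{i=1}^m \text{Fix}\; T_i = C$. This reduces the multi-map situation to the single-function framework of Theorem \ref{thm-gen-rate}.

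Next I would produce a modulus of regularity for $F$ w.r.t. $\text{zer}\; F$ and $\overline{B}(z,b)$ by composing the given data; concretely, set $\phi'(\varepsilon) := \phi(\rho(\varepsilon))$. Given $x \in \overline{B}(z,b)$ with $F(x) < \phi(\rho(\varepsilon))$, every index $i$ satisfies $d(x, T_i x) < \phi(\rho(\varepsilon))$, so by the common modulus of regularity for each $T_i$ on $\overline{B}(z,b)$ we get $\text{dist}(x, \text{Fix}\; T_i) < \rho(\varepsilon)$ for all $i$; then the modulus of metric regularity $\rho$ of the family $\text{Fix}\; T_1, \ldots, \text{Fix}\; T_m$ w.r.t.\ $\overline{B}(z,b)$ delivers $\text{dist}(x, C) < \varepsilon$, as desired.

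It remains to verify the rest of the hypotheses of Theorem \ref{thm-gen-rate}. Fej\'er monotonicity of $(x_n)$ w.r.t. $C$ together with $d(x_0,z) \le b$ forces $(x_n) \subseteq \overline{B}(z,b)$, so the modulus of regularity $\phi'$ applies along the sequence. The common approximate fixed point bound $\alpha$ translates directly into an approximate zero bound for $F$, since $F(x_n) = \max_i d(x_n, T_i x_n) < \varepsilon$ whenever $\bigwedge_i d(x_n, T_i x_n) < \varepsilon$. With $X$ complete and $C = \text{zer}\; F$ closed by hypothesis, part (i) of Theorem \ref{thm-gen-rate} applies and yields convergence of $(x_n)$ to a point of $C$ with rate $\alpha(\phi'(\varepsilon/2)) = \alpha(\phi(\rho(\varepsilon/2)))$.

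There is essentially no obstacle here: the proof is bookkeeping in the composition $\phi \circ \rho$ and a check that the quantifiers in the approximate fixed point bound line up with those in the approximate zero bound. The one small subtlety is making sure that the modulus of regularity for each $T_i$ and the modulus of metric regularity for the family $\{\text{Fix}\; T_i\}$ are stated on the same ball $\overline{B}(z,b)$, which is precisely the one that contains the Fej\'er monotone orbit, so the composition is legitimate.
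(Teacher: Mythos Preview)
Your proposal is correct and follows essentially the same approach as the paper: define $F(x)=\max_i d(x,T_i x)$, check that $\phi\circ\rho$ is a modulus of regularity for $F$ w.r.t.\ $\text{zer}\;F=C$ and $\overline{B}(z,b)$, and invoke Theorem~\ref{thm-gen-rate}. Your write-up actually spells out a couple of verifications (e.g., that the common approximate fixed point bound gives the approximate zero bound for $F$) that the paper leaves implicit.
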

\begin{proof}
Denote $F : X \to \R$, $F(x) =  \max_{i = \overline{1,m}}d(x,T_i (x))$. Clearly, ${\rm zer}\; F =C$. Let $\eps > 0$ and $x \in \overline{B}(z,b)$ with $F(x) < \phi(\rho(\eps))$. Then $d(x,T_i x) < \phi(\rho(\eps))$ for all $i \in \{1, \ldots, m\}$, so $\text{dist}(x, \text{Fix}\;T_i) < \rho(\eps)$ for all $i \in \{1, \ldots, m\}$. Since the sets $\text{Fix}\; T_1,\ldots,\text{Fix}\; T_m$ are metrically regular w.r.t. $\overline{B}(z,b)$ with modulus $\rho$, we get $\text{dist}(x,{\rm zer}\; F) < \eps$. Thus, $\phi \circ\rho$ is a modulus of regularity for $F$ w.r.t. ${\rm zer}\; F$ and $\overline{B}(z,b)$. The result follows now from Theorem \ref{thm-gen-rate}.
\end{proof}

Recall that for retractions $T_i:X\to C_i(=\text{Fix}\; T_i)$, $\phi(\varepsilon)=\varepsilon$ is a modulus of regularity for $T_i$ w.r.t. $\text{Fix}\; T_i$. Consequently, we get the next result.
\begin{co}\label{cor-cyclic-retractions}
Let $C_1,\ldots,C_m\subseteq X$ be subsets of a complete metric space $(X,d)$ 
with $C:=\bigcap^m_{i=1} C_i$ nonempty and closed, and 
$T_i:X\to C_i,$ $i=1,\ldots,m$, be retractions. Then under the assumptions 
on $(x_n)$ and on the metric regularity of $C_1,\ldots,C_m$ from Corollary \ref{regularity-theorem}, one has $\alpha(\rho(\varepsilon/2))$ as a rate of convergence for $(x_n)$ to some point in $C$.
\end{co}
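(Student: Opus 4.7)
The plan is to invoke Corollary \ref{regularity-theorem} with the identity function $\phi(\varepsilon)=\varepsilon$ serving as a common modulus of regularity for the retractions $T_1,\ldots,T_m$. This reduces the entire statement to verifying that retractions admit such a trivial modulus, which was already recorded in Example \ref{ex-mwu-fix}(iii): for a retraction $T:X\to C_i$, one has $\text{Fix}\; T_i=T_i(X)=C_i$, and so $d(x,T_i x)<\varepsilon$ immediately gives $\text{dist}(x,\text{Fix}\; T_i)\le d(x,T_i x)<\varepsilon$ since $T_i x\in C_i=\text{Fix}\; T_i$. Notably this argument does not require continuity of $T_i$, so the hypothesis of Corollary \ref{regularity-theorem} asking for a common modulus of regularity for the $T_i$ w.r.t.\ $\text{Fix}\; T_i$ and $\overline{B}(z,b)$ is met with $\phi(\varepsilon)=\varepsilon$ uniformly in $i$ and with no dependence on the ball.

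Once this observation is in place, I would simply apply Corollary \ref{regularity-theorem}: the remaining hypotheses — completeness of $X$, the equality $\bigcap_i\text{Fix}\; T_i=\bigcap_i C_i=C$ together with closedness of $C$, Fej\'er monotonicity of $(x_n)$ w.r.t.\ $C$, the starting bound $d(x_0,z)\le b$, the metric regularity of $C_1,\ldots,C_m$ w.r.t.\ $\overline{B}(z,b)$ with modulus $\rho$, and the common approximate fixed point bound $\alpha$ — are explicitly inherited from the assumptions of Corollary \ref{regularity-theorem} quoted in the statement. Plugging $\phi(\varepsilon)=\varepsilon$ into the rate $\alpha(\phi(\rho(\varepsilon/2)))$ produced by that corollary collapses it to $\alpha(\rho(\varepsilon/2))$, which is precisely the claimed rate of convergence. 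There is no real obstacle beyond noting the triviality of the modulus for retractions; the rest is a direct instantiation of the previous corollary.
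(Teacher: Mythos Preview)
Your proposal is correct and follows exactly the paper's own reasoning: the paper notes immediately before the corollary that retractions have $\phi(\varepsilon)=\varepsilon$ as a modulus of regularity (Example \ref{ex-mwu-fix}(iii)) and then simply reads off the rate from Corollary \ref{regularity-theorem}. There is nothing to add.
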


Note that Corollary \ref{cor-cyclic-retractions} applies in particular to so-called 
convex feasibility problems: having $X$ a complete CAT$(0)$ space 
(or even a CAT$(\kappa)$ space with $\kappa>0$ and an appropriate upper 
bound on its diameter) and $C_1,\ldots,C_m\subseteq X$ nonempty, closed and convex 
with $C:=\bigcap^m_{i=1} C_i\not=\emptyset$, the convex feasibility problem (CFP) consits of finding a point in $C$. Then we may apply Corollary \ref{cor-cyclic-retractions} with $T_i$ being the metric projection onto $C_i$.

\mbox{}

We study next the finite convergence of a sequence to a zero of a maximal monotone operator. To this end we assume condition \eqref{cor-finite-term-max-mon-eq1} which was considered by Rockafellar in \cite[Theorem 3]{R76} (see also \cite{Nev79}) to show that the proximal point algorithm terminates in finitely many iterations.
\begin{co}\label{cor-finite-term-max-mon}
Let $H$ be a Hilbert space and $A : H \to 2^H$ maximal monotone such that
\begin{equation}\label{cor-finite-term-max-mon-eq1}
\exists z \in H \; \exists \eps^* > 0 \;  \left(B(O,\eps^*) \subseteq A(z)\right).
\end{equation}
If $(x_n)$ is a sequence in $H$ that is a Fej\'{e}r monotone w.r.t. ${\rm zer}\; A$ and there exists $\alpha : (0,\infty) \to \N$ such that
\[\forall \eps > 0 \, \exists n \le \alpha(\eps) \, \left({\rm dist}(O,A(x_n)) < \eps\right),\]
then $\text{zer}\; A = \{z\}$ and $x_n = z$ for all $n \ge \alpha(\eps^*)$. 
\end{co}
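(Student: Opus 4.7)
The plan is to cast the statement as an application of Theorem \ref{thm-gen-rate}(ii), or rather of the sharper termination criterion in Remark \ref{rmk-finite-conv}. Set $F:H\to\overline\R$ by $F(x):=\text{dist}(O,A(x))$. Since $A$ is maximal monotone, each image $A(x)$ is closed, so condition \eqref{eq-cond-oper} applies and $\text{zer}\; F=\text{zer}\; A$. The Fej\'er monotonicity hypothesis on $(x_n)$ and the existence of the approximate-zero bound $\alpha$ are precisely the hypotheses of Theorem \ref{thm-gen-rate} for this $F$.

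The content lies in verifying the strong termination condition \eqref{rmk-finite-conv-eq} with the given $\eps^*$, i.e.\ that $\text{dist}(O,A(x))<\eps^*$ forces $x=z$. Fix such an $x$ and pick $v\in A(x)$ with $\|v\|<\eps^*$. By hypothesis every $u\in B(O,\eps^*)$ lies in $A(z)$, so monotonicity of $A$ gives
\[
\langle v-u,\, x-z\rangle\ge 0,\qquad\text{i.e.}\qquad \langle v,\, z-x\rangle\le \langle u,\, z-x\rangle.
\]
Since $\inf_{u\in B(O,\eps^*)}\langle u,\, z-x\rangle=-\eps^*\|z-x\|$, one obtains $\langle v,\, z-x\rangle\le -\eps^*\|z-x\|$. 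Combined with the Cauchy--Schwarz bound $\langle v,\, z-x\rangle\ge -\|v\|\,\|z-x\|$, this yields $\|v\|\ge \eps^*$ whenever $x\ne z$, contradicting $\|v\|<\eps^*$. Hence $x=z$. Taking the special case $v=O$ (allowed for any hypothetical element $x\in\text{zer}\; A$) simultaneously gives $\text{zer}\; A=\{z\}$.

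With \eqref{rmk-finite-conv-eq} in hand, Remark \ref{rmk-finite-conv} applies directly: the approximate-zero hypothesis yields some $N\le\alpha(\eps^*)$ with $F(x_N)<\eps^*$, whence $x_N=z$; Fej\'er monotonicity with respect to the singleton $\text{zer}\; A=\{z\}$ then forces $d(x_n,z)\le d(x_N,z)=0$ for all $n\ge N$, giving $x_n=z$ for every $n\ge\alpha(\eps^*)$. The only place requiring any real thought is the monotonicity computation above, where one must use that the infimum of $\langle u,\cdot\rangle$ over the \emph{open} ball $B(O,\eps^*)$ still equals $-\eps^*\|\cdot\|$ although it is not attained; everything else is bookkeeping against Theorem \ref{thm-gen-rate} and Remark \ref{rmk-finite-conv}.
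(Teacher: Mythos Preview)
Your proof is correct and follows essentially the same route as the paper's. Both arguments define $F(x)=\text{dist}(O,A(x))$, verify the strong termination condition \eqref{rmk-finite-conv-eq} via monotonicity, and conclude by Remark~\ref{rmk-finite-conv}; the only cosmetic difference is that the paper reaches the inequality $\eps^*\|x-z\|\le\|v\|\,\|x-z\|$ by testing against the explicit sequence $v_n=\frac{\eps^*}{1+1/n}\frac{x-z}{\|x-z\|}\in B(O,\eps^*)\subseteq A(z)$, whereas you equivalently take the infimum over the open ball directly.
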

\begin{proof}
Define $F : H \to \overline\R$, $F(x) := \text{dist}(O,A(x))$. We show that $F$ satisfies \eqref{rmk-finite-conv-eq}. For $w \in \R$, $0 \le w < \eps^*$ and $x \in F^{-1}(w)$, we have $\text{dist}(O,A(x)) = w < \eps^*$, so there exists $u \in A(x)$ such that $\|u\| < \eps^*$. Assume that $x \ne z$ and define
\[v_n := \frac{\eps^*}{1 + 1/n}\frac{x-z}{\|x-z\|}, \; n \in \N \setminus \{0\}.\] 
Note that $\|v_n\| < \eps^*$, so $v_n \in A(z)$. By the monotonicity of $A$, $\langle x-z, v_n\rangle \le \langle x-z, u\rangle$ for all $n \in \N \setminus \{0\}$, which yields $\eps^* \|x-z\| \le \|u\|\|x-z\|$. This is a contradiction, so $F^{-1}(w) = \{z\}$, which shows in particular that $\text{zer}\; A = \{z\}$. By Remark \ref{rmk-finite-conv}, $x_n = z$ for all $n \ge \alpha(\eps^*)$.
\end{proof}

We apply in the following the above results to different algorithms.

\subsection*{Picard iteration} 
Let $X$ be a complete metric space and $T:X\to X$ a quasi-nonexpansive mapping with $\text{Fix}\; T\not=\emptyset$. The {\it Picard iteration} generates starting from $x_0 \in X$ the sequence given by
\begin{equation}\label{Picard-algorithm} 
x_{n+1} := T x_n \quad \text{for any } n \in \N.
\end{equation}
It is well-known that $(x_n)$ is Fej\'{e}r monotone w.r.t. $\text{Fix}\; T$. Moreover, $\text{Fix}\; T$ is closed. Note also that if $T$ is nonexpansive, a function $\alpha : (0,\infty) \to \N$ such that 
\[\forall \eps > 0 \, \exists n \le \alpha(\eps) \, \left(d(x_n ,Tx_n ) < \eps\right)\]
is actually a rate of asymptotic regularity for $(x_n)$ as the sequence $(d(x_n,Tx_n))$ is nonincreasing.

Let $b > 0$ be an upper bound on $d(x_0,z)$ for some $z \in {\rm Fix}\; T$. By Fej\'{e}r monotonicity, $(x_n) \subseteq \overline{B}(z,b)$.
Considering $F:X \to \R, F(x)= d(x, Tx)$, if $\phi$ is a modulus of regularity for $F$ w.r.t. $\text{zer}\; F$ and $\overline{B}(z,b)$, and $\alpha$ is a rate of asymptotic regularity for $(x_n)$, then, applying Theorem \ref{thm-gen-rate}, we can deduce that $(x_n)$ converges to a fixed point of $T$ with a rate of convergence $\alpha(\phi(\varepsilon/2))$. In what follows we consider two problems where such $\alpha$ and $\phi$ can be computed explicitly.

\mbox{}

First we focus on the problem of minimizing the distance between two nonintersecting sets applying the alternating projection method. If $X$ is a complete CAT$(0)$ space and $U, V \subseteq X$ are nonempty, closed and convex with $U \cap V = \emptyset$, then one aims to find best approximation pairs $(u,v) \in U \times V$ such that $d(u,v) = \text{dist}(U,V)$. This problem was studied in \cite{Ban14,ALN15} (for results in Hilbert spaces, see, e.g., \cite{BB94,Kop12}). Denote $\rho := \text{dist}(U,V)$ and suppose that $S :=  \left\{(u,v) \in U \times V : d(u,v)=\rho\right\} \ne \emptyset$. Given $x_0 \in X$, we consider the sequence $(x_n)$ given by \eqref{Picard-algorithm}, where $T : X \to X$, $T := P_U \circ P_V$. Then:
\begin{itemize}
\item $T$ is nonexpansive.
\item If $(u,v) \in S$, then $u\in \text{Fix}\; T$, so $\text{Fix}\; T \ne \emptyset$. At the same time, if $u \in \text{Fix}\; T$, then $(u,P_V u) \in S$ (see \cite{ALN15}). We take $b$ an upper bound on $d(x_0,z)$ for some fixed $z \in \text{Fix}\; T$.
\item For any $x \in X$ and $u\in \text{Fix}\; T$,
\begin{equation}\label{min-pb-eq1}
d(Tx,P_V x)^2 \le \rho^2 + d(u,x)^2 - d(u,Tx)^2.
\end{equation}
To see this, apply first \eqref{eq-CAT0-proj} to get $d(Tx,P_V x)^2 + d(u,Tx)^2 \le d(u,P_V x)^2$ and $d(x,P_V x)^2 + d(P_V x, P_V u)^2 \le d(x,P_V u)^2$. Then
\begin{align*}
d(Tx,P_V x)^2 + d(u,Tx)^2 & \le d(u,P_V x)^2 + d(x,P_V u)^2 - d(x,P_V x)^2 - d(P_V x, P_V u)^2 \\
& \le d(u, P_V u)^2 + d(u,x)^2 \quad \text{by }\eqref{def-CAT0}.
\end{align*}
\item $(x_n)$ is asymptotically regular with a rate of asymptotic regularity (see \cite{ALN15})
\[\alpha_s(\eps):=\left[\frac{s}{\eps^2}\right] + 1,\]
where $s \ge d(T x_0, P_V x_0)^2$. By \eqref{min-pb-eq1}, we can take $s = \rho^2 + b^2$ and we obtain the following rate of asymptotic regularity
\[\alpha(\eps):=\left[\frac{\rho^2 + b^2}{\eps^2}\right] + 1.\]
\end{itemize}

We assume that the sets $U$ and $V$ are additionally {\it boundedly regular} which means that for any bounded set $K \subseteq X$ and any $\eps > 0$ there exists $\delta > 0$ such that for all $x \in K$ we have the following implication
\begin{equation}\label{min-pb-eq2}
\text{dist}(x,U) < \delta \wedge \text{dist}(x,V) < \rho + \delta  \; \Rightarrow \; \text{dist}(x,\text{Fix}\; T) < \eps.
\end{equation}
This notion is a natural analogue of the one given in \cite{BauBor93} in the setting of Hilbert spaces. Extensions of the concept of bounded regularity have been introduced in the recent paper \cite{CRZ} to analyze the speed of convergence of a sequence defined by a family of operators.

Let $\eps > 0$ and consider $K := \overline{B}(z,b)$. Since $U, V$ are boundedly regular, there exists $\delta = \delta(\eps) > 0$ such that \eqref{min-pb-eq2} holds for $x \in K$. We show next that $\phi : (0,\infty) \to (0,\infty)$,
\[\phi(\eps) := \frac{\rho \delta}{b + \rho},\]
is a modulus of regularity for $T$ w.r.t. $\text{Fix}\; T$ and $K$. To this end, let $x \in K$ such that $d(x,Tx) < \phi(\eps)$. Clearly, $\text{dist}(x,U) < \phi(\eps) < \delta$. From \eqref{min-pb-eq1}, it follows that $d(Tx,P_V x)^2 \le \rho^2 + 2d(x,Tx)d(z,x)$, so 
\[d(Tx,P_V x)^2 \le \rho^2 + 2b\phi(\eps).\]
Now,
\[\text{dist}(x,V) \le d(x, P_V x) \le d(x,Tx) + d(Tx,P_V x) < \phi(\eps) + \sqrt{\rho^2 + 2b\phi(\eps)}.\]
An easy computation shows that $\text{dist}(x,V) < \rho + \delta$. Thus, $\text{dist}(x,\text{Fix}\; T) < \eps$.

\mbox{}

Another algorithm which fits into the scheme \eqref{Picard-algorithm} is the gradient descent method employed to find minimizers of convex functions. Let $H$ be a Hilbert space and $f:H\to \R$ convex, Fr\'{e}chet differentiable on $H$ and such that its gradient $\nabla f$ is $L$-Lipschitz. Suppose that $\argmin\; f \ne \emptyset$. Given $x_0 \in H$, the gradient descent method with constant step size $1/L$ generates the sequence $(x_n)$ given by \eqref{Picard-algorithm}, where $T : H \to H$, $T := \text{Id} -\frac{1}{L}\nabla f$. Then the following facts are known: 
\begin{itemize}
\item $\argmin\; f \subseteq \text{Fix}\; T$, so $\text{Fix}\; T \ne \emptyset$. We take $b$ to be an upper bound on $d(x_0,z)$ for some $z \in \text{Fix}\; T$.
\item $T$ is firmly nonexpansive by the Baillon-Haddad theorem \cite{BH77}.
\item $(x_n)$  is asymptotically regular with a rate of asymptotic regularity (see \cite{AriLeuLop14})
\[\alpha(\eps):=\left[\frac{32(b+1)^2}{\eps^2}\right].\]
\end{itemize}

In addition, suppose that $\nabla f$ is $\psi$-strongly accretive. Then $\phi : (0,\infty)  \to (0,\infty)$, $\phi(\eps) = \psi(\eps)/L$ is a modulus of regularity for $T$ w.r.t. $\text{Fix}\; T$ (see Example \ref{ex-zero-op}.(i)).

\mbox{}

In \cite{Kohlenbach(SNE)}, it is shown that the Picard iteration $(x_n)$ of 
the composition $T:=T_m\circ\ldots \circ T_1$ of finitely many metric projections $T_i:=P_{C_i}$ of a complete 
CAT$(\kappa)$ space $X$ (with $\kappa > 0$ and diameter less than $\pi/(2\sqrt{\kappa})$) onto 
closed and convex sets $C_i \subseteq X$, $i=1,\ldots,m$, with 
$C:=\bigcap^m_{i=1} C_i \not=\emptyset$ is asymptotically regular and has 
common approximate fixed points. Moreover, an explicit common approximate 
fixed point bound (in the sense of Corollary 
\ref{regularity-theorem}) is given (follows from \cite[Corollaries 4.17, 4.5]{Kohlenbach(SNE)} and the Lipschitz continuity of $T_i$). Since metric 
projections in CAT$(\kappa)$ spaces are quasi-nonexpansive (and so is 
$T$ since the fixed points of $T$ are precisely 
the common fixed points of $T_1,\ldots,T_m$, see \cite{Kohlenbach(SNE)})  one gets 
the Fej\'er monotonicity w.r.t. $C$ of the sequence $(x_n)$. A related algorithm to approach the CFP in this setting is the cyclic projection method. In this case, $(x_n)$ is not a Picard iteration, but is generated by $x_{n+1} := T_{\bar n} x_n$, where $n \in \N$, $x_0 \in X$ and $T_{\bar n} := T_{n (\text{mod } m) + 1}$. The sequence $(x_n)$ is again Fej\'{e}r monotone w.r.t. $C$ and an explicit common approximate fixed point bound when $X$ is a CAT$(0)$ space can be obtained from \cite[Theorem 3.2, Remark 3.1]{ALN15}. Thus, our general results on rates of convergence are applicable in these situations (see Corollary \ref{cor-cyclic-retractions} and the comment below it).

\mbox{}

We finish this subsection with the following observations.
\begin{re} \label{re-noncomp}
There exists a computable firmly nonexpansive mapping $T:[0,1]\to [0,1]$ such that the computable Picard iteration $x_n:=T^n 0$ is convergent and does not have a computable rate of convergence.
\end{re}
\begin{proof}
We use a construction from \cite{Neumann}: let $(a_n)$ be a 
so-called Specker sequence, i.e. a computable 
nondecreasing sequence of rational numbers in $[0,1]$ without a computable 
limit (which exists by \cite{Specker(49)}). Define 
\[f_n:[0,1]\to [0,1], \ f_n(x):=\max\{ x,a_n\} \] 
and put 
\[ T(x):=\frac{1}{2} (x+f(x)), \ \mbox{where} \ f(x):=\sum^{\infty}_{n=0} 
2^{-n-1} f_n(x). \]
Then $f:[0,1]\to [0,1]$ is nonexpansive with $\text{Fix}\; f=[a,1],$ where 
$a:=\lim_{n\to\infty} a_n,$ and so $T$ is firmly nonexpansive 
with $\text{Fix}\;T=[a,1]$. Since $x_n\le x_{n+1} \le a$, $(x_n)$ converges to 
a fixed point of $T$ which must be $a$. If $(x_n)$ had a computable 
rate of convergence, then $a$ would be computable, which is a contradiction.
\end{proof}

\begin{re}\label{fn-noncomp}
Since $T$ is firmly nonexpansive, the sequence $(x_n)$ defined in Remark \ref{re-noncomp} is Fej\'{e}r monotone w.r.t. $\text{Fix}\; T$ and asymptotically regular with an explicit rate of asymptotic regularity. Thus, by Theorem \ref{thm-gen-rate} (applied to $F(x):=|x-Tx|$), $T$ has no computable modulus of regularity w.r.t. $\text{Fix}\; T$.
\end{re}

\subsection*{Mann iteration}

Let $X$ be a uniquely geodesic space and $T : X \to X$ with $\text{Fix}\; T \ne \emptyset$. The {\it Mann iteration} associated to $T$ starting from $x_0 \in X$ is defined by 
\begin{equation}\label{def-Mann-ne}
x_{n+1}:=(1-\lambda_n)x_n + \lambda_nTx_n \quad \text{for any } n \in \N,
\end{equation}
where the coefficients $\lambda_n$ are in $[0,1]$. 

Suppose next that $X$ is a $\CAT(0)$ space, $T$ is nonexpansive and $b > 0$ is an upper bound on $d(x_0,z)$ for some $z \in {\rm Fix}\; T$. Note that $(x_n)$ is Fej\'{e}r monotone w.r.t. $\text{Fix}\; T$. If additionally $(\lambda_n)$ satisfies $\sum_{n=0}^\infty \lambda_n(1-\lambda_n) = \infty$ with rate of divergence $\theta$, then it was proved in \cite{Leu07} that $(x_n)$ is asymptotically regular with the following rate of asymptotic regularity
\[\alpha(\varepsilon ):=\theta\left( \left\lceil\frac{4(b +1)^2}{\eps^2}\right\rceil\right).\]

In the setting of Hilbert spaces, the Mann algorithm has been used in combination with splitting methods to solve problems that can be abstracted into finding a zero of the sum of two maximal monotone operators. Let $A, B:H\to2^H $ be two maximal monotone operators with $\text{zer}(A+B) \ne \emptyset$ and let $\gamma > 0$. The Douglas-Rachford algorithm is the Mann algorithm with $T := R_{\gamma A} R_{\gamma B}$. Note that in this case, as mentioned in Section \ref{section-prelim}, $T$ is a nonexpansive mapping defined on $H$. Since, by \cite[Proposition 26.1]{BC17}, $\text{zer}(A+B)= J_{\gamma B}(\text{Fix}\;  T)$, we have $\text{Fix}\;  T \ne \emptyset$. If $\phi$ is a modulus of regularity for $T$ w.r.t. $\text{Fix}\; T$ and $\overline{B}(z,b)$, then, by Theorem \ref{thm-gen-rate}, the sequence $(x_n)$ converges to a fixed point of $T$ with rate of convergence $\alpha(\phi(\eps/2))$. 

In particular, if $C_1$, $C_2$ and $T$ are as in Example \ref{ex-mwu-fix}.(iv), then we obtain a rate of convergence for $(x_n)$ to a fixed point of $T$ whose projection onto $C_1$ lies in $C_1\cap C_2 = \text{zer}(N_{C_2}+N_{C_1})$. Consequently, the sequence $(P_{C_1} x_n)$ converges to a point in $C_1\cap C_2$ with the same rate of convergence.

\mbox{}

The CFP can also be solved using a Mann-type iteration studied by Crombez \cite{Cro91,Cro92} which was analyzed quantitatively in \cite{KhanKohlenbach14} (see also \cite{Rei83,Nic13}). Let $H$ be a Hilbert 
space and $C_1,\ldots,C_m\subseteq H$ be closed and convex subsets 
with $C:=\bigcap^m_{i=1} C_i\not=\emptyset$. For $1\le i\le m$, 
let $P_{C_i}:H \to C_i$ be metric projections, $T_i:=\text{Id}+\lambda_i(P_{C_i}-\text{Id})$ with $0<\lambda_i\le 2, \, \lambda_1<2$, and put $T:=\sum^m_{i=1} a_iT_i$, where $a_1,\ldots,a_m \in (0,1)$ with $\sum_{i=1}^m a_i =1$. As shown in \cite{KhanKohlenbach14}, $T$ can be written as $T=a\text{Id}+(1-a)S$ for suitable $a \in (0,1)$ and nonexpansive $S : C \to C$ which satisfies $\text{Fix}\; S=C$.  Let $x_0\in H$ and $b \ge \| x_0-z\|$ for some $z\in C$. 
The sequence  $x_n:=T^{n}x_0$ is Fej\'er monotone w.r.t. $\text{Fix}\; S$ 
since it is the Mann iteration associated to $S$ with constant 
coefficient $a$. Moreover, the sequences $(\|x_n - P_{C_i}x_n\|)_n$, $i = 1, \ldots, m$, are asymptotically regular with a rate of asymptotic regularity $\alpha^*$ which is quartic in $1/\eps$ (see \cite[Corollary 4.3.(i)] {KhanKohlenbach14} where the exact expression of $\alpha^*$ is given). In particular, $\alpha^*$ is a common approximate fixed point bound for $P_{C_1},\ldots,P_{C_m}$. We can now apply Corollary \ref{cor-cyclic-retractions} to obtain that the sequence $(x_n)$ converges to a point in $C$ with a rate of convergence $\alpha^*(\rho(\eps/2))$ whenever 
$C_1,\ldots,C_m$ are metrically regular w.r.t. 
$\overline{B}(z,b)$ with modulus $\rho.$ This, in particular, applies to 
$H=\R^n$ and the situation of Example \ref{ex2} with 
the modulus of metric regularity $\rho$ given there.

\subsection*{Proximal point algorithm}
Let $H$ be a Hilbert space and $A : H \rightarrow 2^H$ a maximal monotone operator with $\text{zer}\; A \ne \emptyset$. Note that $\text{zer}\; A$ is closed. Given $x_0 \in H$ and a sequence of positive numbers $(\gamma_n)$, the {\it proximal point algorithm} (PPA) generates the sequence defined by
\begin{equation}\label{eq-ppa}
x_{n+1} := J_{\gamma_n A} x_n \quad \text{for any } n \in \N.
\end{equation}
It is well-known that $(x_n)$ is Fejer monotone w.r.t. $\text{zer}\; A$. Denoting $F : H \to \overline{\R}$, $F(x) =  \text{dist}(O,A(x))$ and $u_n = \frac{x_n - x_{n+1}}{\gamma_n}$, we have $F(x_{n+1}) \le \|u_n\|$ for all $n \in \N$. Take $b>0$ an upper bound of $\|x_0 - z\|$ for some $z \in \text{zer}\; A$. If $\sum_{n=0}^\infty \gamma_n^2 = \infty$ with a rate of divergence $\theta$, then, by \cite[Lemma 8.3.(ii)]{KLA}, $\theta\left(\left\lceil \frac{b^2}{\eps^2}\right\rceil\right)$ is a rate of convergence of $(\|u_n\|)$ towards $0$. Therefore,
\[\forall \eps > 0 \, \forall n \ge \theta\left(\left\lceil \frac{b^2}{\eps^2}\right\rceil\right) + 1 \, \left(F(x_n) \le \eps\right)\]
and so $\forall \eps > 0 \, \left(F(x_{\alpha(\eps)}) < \eps\right)$, where $\alpha(\eps) := \theta\left(\left\lceil \frac{2b^2}{\eps^2}\right\rceil\right) + 1$. Thus, if $\phi$ is a modulus of regularity for $A$ w.r.t. $\text{zer}\; A$ and $\overline{B}(z,b)$, then, by Theorem \ref{thm-gen-rate}, $(x_n)$ converges to some $z' \in \text{zer}\; A$ with a rate of convergence $\alpha(\phi(\eps/2,b))$. In addition, if \eqref{cor-finite-term-max-mon-eq1} holds, then we can apply Corollary \ref{cor-finite-term-max-mon} to get $x_n = z'$ for all $n \ge \alpha(\eps^*)$. This gives a quantitative version of a special form of \cite[Theorem 3]{R76}.

\mbox{}

The PPA has been extensively applied as a method to localize a minimizer of a convex function. Let $f:H \to (-\infty,\infty]$ be proper, convex and lower semi-continuous and $S = \argmin\; f \ne \emptyset$. In this case, the sequence $(x_n)$ is given by \eqref{eq-ppa} for $A = \partial f$ and we take $z$ and $b$ as above. 

The following additional conditions allow us to give an explicit modulus of regularity for $\partial f$ w.r.t. $\text{zer}\; \partial f$ and $\overline{B}(z,b)$. If $S$ is a set of $\psi$-boundedly global weak sharp minima for $f$, then $\phi^* : (0,\infty) \to (0,\infty)$, $\phi^*(\eps)=\psi_C(\eps)$, where $C := \overline{B}(z,b+1)$, is a modulus of regularity for $f$ w.r.t. $S$ and $\overline{B}(z,b+1)$ (see Example \ref{ex-min-pb}.(ii)). Suppose that $\left.f\right|_{\overline{B}(z,b+2)}$ is uniformly continuous with a modulus of uniform continuity $\rho$. Applying Theorem \ref{thm-equiv-mod} and using the expressions of the moduli computed in its proof, one obtains that $\phi : (0,\infty) \to (0,\infty)$, 
\[\phi(\eps):=\min\left\{\rho\left(\frac{\psi_C(\eps/2)}{2}\right),\frac{\psi_C(\eps/2)}{2(b+1)},\frac{\eps}{2},1\right\},\]
is a modulus of regularity for $\partial f$ w.r.t. $\text{zer}\; \partial f$ and $\overline{B}(z,b)$.

We finish with a particular situation when we obtain finite convergence of $(x_n)$. Suppose that $S$ is a set of $\psi$-global weak sharp minima for $f$ with $\psi(\eps) = k \,\eps$, where $k > 0$. We prove that $F$ defined as before satisfies \eqref{rmk-finite-conv-eq}. Applying \cite[Theorem 2, Lemma 5]{Fer91}, it follows that there exists $\eps^* > 0$ such that if $x \in H$, $u \in \partial f(x)$ with $\|u\| \le \eps^*$, then $x \in S$. For $w \in \R$, $0 \le w < \eps^*$ and $x \in F^{-1}(w)$, we have $\text{dist}(O,\partial f(x)) = w < \eps^*$, so there exists $u \in \partial f(x)$ such that $\|u\| < \eps^*$. Therefore, $x \in S$, which yields $F^{-1}(w) \subseteq \text{zer}\; F$. By Remark \ref{rmk-finite-conv}, $x_n = z'$ for all $n \ge \alpha(\eps^*)$, where $z' \in S$ and $\alpha$ is given above. This gives a quantitative version of the main result in \cite{Fer91}.



\begin{thebibliography}{99}
\addcontentsline{toc}{section}{References}

\bibitem{Anderson} 
  {\sc R.M. Anderson,} 
  {\sl `Almost' implies `Near',} 
  {Trans. Amer. Math. Soc.} {\bf 296} (1986), 229--237.

\bibitem{ABJL} 
  {\sc D. Ariza-Ruiz, E.~M. Briseid, A. Jim\'{e}nez-Melado, and G. L\'{o}pez-Acedo,}
  {\sl Rate of convergence under weak contractiveness conditions,} 
  {Fixed Point Theory}  {\bf 14} (2013), 11--27.

\bibitem{AriLeuLop14}
  {\sc D. Ariza, L. Leu\c stean, and G. L\'{o}pez-Acedo,}
  {\sl Firmly nonexpansive mappings in classes of geodesic spaces,}
  {Trans. Amer. Math. Soc.} {\bf 366} (2014), 4299--4322.

\bibitem{ALN15} 
  {\sc D. Ariza-Ruiz, G. L\'{o}pez-Acedo, and A. Nicolae,} 
  {\sl The asymptotic behavior of the composition of firmly nonexpansive mappings,}
  {J. Optim. Theory Appl.} {\bf 167} (2015), 409--429.

\bibitem{Bac14}
  {\sc M. Ba\v{c}\'{a}k,} 
  {\sl Computing medians and means in Hadamard spaces,} 
  {SIAM J. Optim.} {\bf 24} (2014), 1542--1566.
  
\bibitem{BH77}
  {\sc J.~B. Baillon and G.~Haddad,}
  {\sl Quelques proprietes des operateurs angle-bornes et ncycliquement monotones}  
  {Israel J. Math.} {\bf 26} (1977), 137--150.

\bibitem{Ban14}
  {\sc S. Banert,}
  {\sl Backward-backward splitting in Hadamard spaces,}
  {J. Math. Anal. Appl.} {\bf 414} (2014), 656--665. 

\bibitem{B96}
  {\sc H.~H. Bauschke,} 
  {\sl  The approximation of fixed points of compositions of nonexpansive mappings in Hilbert space,} 
  {J. Math. Anal. Appl.} {\bf 202} (1996), 150--159. 

\bibitem{BauBor93}
  {\sc H.~H. Bauschke and J.~M. Borwein,}
  {\sl On the convergence of von Neumann's alternating projection algorithm for two sets,} 
  {Set-Valued Anal.} {\bf 1} (1993), 185--212.

\bibitem{BauschkeBorwein96} 
  {\sc H.~H. Bauschke and J.~M. Borwein,}
  {\sl On projection algorithms for solving convex feasibility problems,} 
  {SIAM Review} {\bf 38} (1996), 367--426.

\bibitem{BB94}
  {\sc H.~H. Bauschke and J.~M. Borwein,}
  {\sl Dykstra's alternating projection algorithm for two sets,}
  {J. Approx. Theory} {\bf 79} (1994), 418--443.

\bibitem{BC17}
  {\sc H.~H. Bauschke and P.~L Combettes,}
  {\sl Convex Analysis and Monotone Operator Theory in Hilbert Spaces, Second Edition,}
  {CMS Books in Mathematics,} {Springer, Cham,} 2017.

\bibitem{BerNik08}
  {\sc I.~D. Berg and I.~G. Nikolaev,} 
  {\sl Quasilinearization and curvature of Aleksandrov spaces,}
  {Geom. Dedicata} {\bf 133} (2008), 195--218.
  
\bibitem{BerNik15}
  {\sc I.~D. Berg and I.~G. Nikolaev,} 
  {\sl Characterization of Aleksandrov Spaces of Curvature Bounded Above by Means of the Metric Cauchy-Schwarz Inequality,}
  {Michigan Math. J.} (2018) (in press).
  
\bibitem{Bol17}
  {\sc J. Bolte, T.~P. Nguyen, J. Peypouquet, and B.~W. Suter,}
  {\sl From error bounds to the complexity of first-order descent methods for convex functions,}
  {Math. Program., Ser. A} {\bf 165} (2017), 471--507. 

\bibitem{Borwein14}
  {\sc J.~M. Borwein, G. Li, and L.~J. Yao,} 
  {\sl Analysis of the convergence rate for the cyclic projection algorithm applied to basic semi-algebraic convex sets,} 
  {SIAM J. Optim.} {\bf 24} (2014), 498--527.

\bibitem{Borwein17}
  {\sc J.~M. Borwein, G. Li, and M.~K. Tam,} 
  {\sl Convergence rate analysis for averaged fixed point iterations in common fixed point problems,} 
  {SIAM J. Optim.} {\bf 27} (2017), 1--33.

\bibitem{Br(JSL)}
  {\sc E.~M. Briseid,} 
  {\sl Logical aspects of rates of convergence in metric spaces,} 
  {J. Symb. Logic} {\bf 74} (2009), 1401--1428.

\bibitem{BD02}
  {\sc J.~V. Burke and S. Deng,} 
  {\sl Weak sharp minima revisited. I. Basic theory,} 
  {Control Cybernet.} {\bf 31} (2002), 439--469.

\bibitem{BF93}
  {\sc J.~V. Burke and M.~C. Ferris,} 
  {\sl Weak sharp minima in mathematical programming,} 
  {SIAM J. Control Optim.} {\bf 31} (1993), 1340--1359.
  
\bibitem{CRZ}
  {\sc A. Ciegielski, S. Reich, and R. Zalas,} 
  {\sl Regular Sequences of Quasi-Nonexpansive Operators and Their Applications,} 
  {SIAM J. Optim.} {\bf 28} (2018), 1508--1532.
	
\bibitem{Cro91} 
  {\sc G. Crombez,}
  {\sl Image recovery by convex combinations of projections,} 
  {J. Math. Anal. Appl.} {\bf 155} (1991), 413--419.
  
\bibitem{Cro92} 
  {\sc G. Crombez,} 
  {\sl Parallel methods in image recovery by projections onto convex sets,} 
  {Czechoslovak Math. J.} {\bf 42} (1992), 445--450.

\bibitem{DR09}
  {\sc A.~L. Dontchev and R.~T. Rockafellar,}
  {\sl Implicit functions and solution mappings. A view from variational analysis,}
  {Springer Monographs in Mathematics,} {Springer, Dordrecht,} 2009.
  
 \bibitem{Fer91} 
  {\sc M.~C. Ferris,} 
  {\sl Finite termination of the proximal point algorithm,} 
  {Math. Program., Ser. A,} {\bf 50} (1991), 359--366. 
  
\bibitem{GR84}
  {\sc K. Goebel and S. Reich,}
  {\sl Uniform Convexity, Hyperbolic Geometry, and Nonexpansive Mappings,}
  {Marcel Dekker, New York,} 1984.

\bibitem{GPR67} 
  {\sc L.~G. Gurin, B.~T. Polyak, and \`{E}.~V. Ra\u{i}k,} 
  {\sl The method of projection for finding the common point of convex sets,} 
  {\v{Z}. Vy\v{c}isl. Mat. i Mat. Fiz.} {\bf 7} (1967), 1211--1228 (in Russian). English translation in:
  {USSR Comput. Math. Phys.} {\bf 7} (1967), 1--24. 

\bibitem{Hal62} 
  {\sc I. Halperin,} 
  {\sl The product of projection operators,} 
  {Acta Sci. Math. (Szeged)} {\bf 23} (1963), 96--99. 

\bibitem{KhanKohlenbach14}
  {\sc M.~A.~A. Khan and U. Kohlenbach,} 
  {\sl Quantitative image recovery theorems,} 
  {Nonlinear Anal.} {\bf 106} (2014), 138--150.

\bibitem{Ko}
  {\sc U. Kohlenbach,}
  {\sl Effective moduli from ineffective uniqueness proofs. An unwinding of de La Vall\'{e}e Poussin's proof for Chebycheff approximation,}
  { Ann. Pure Appl. Logic} {\bf 64} (1993), 27--94.

\bibitem{Ko93}
  {\sc U. Kohlenbach,} 
  {\sl New effective moduli of uniqueness and uniform a--priori estimates for constants of strong unicity by logical analysis of known proofs in best approximation theory.} 
  {Numer. Funct. Anal. Optim.} {\bf 14} (1993), 581--606.

\bibitem{KoBook}
  {\sc U. Kohlenbach,}
  {\sl Applied proof theory: Proof interpretations and their use in mathematics,}
  {Springer Monographs in Mathematics,} {Springer, Berlin and Heidelberg,} 2008.

\bibitem{Kohlenbach(SNE)}
  {\sc U. Kohlenbach,}
  {\sl On the quantitative asymptotic behavior of strongly nonexpansive mappings in Banach and geodesic spaces,} 
  {Israel J. Math.} {\bf 216} (2016), 215--246. 

\bibitem{Koh18}
{\sc U. Kohlenbach,}
{\sl On the reverse mathematics and Weihrauch complexity of moduli of 
    regularity and uniqueness,} (submitted).

\bibitem{KK15}
  {\sc U. Kohlenbach and A. Koutsoukou-Argyraki,} 
  {\sl Rates of convergence and metastability for abstract Cauchy problems generated by accretive operators,} 
  {J. Math. Anal. Appl.} {\bf 423} (2015), 1089--1112.

\bibitem{KLA}
  {\sc U. Kohlenbach, L. Leu\c{s}tean, and A. Nicolae,}
  {\sl Quantitative results on Fej\'er monotone sequences,}
  {Commun. Contemp. Math.} {\bf 20} (2018), 1750015, 42 pp.

\bibitem{KoOl}
  {\sc U. Kohlenbach and P. Oliva,}
  {\sl  Proof mining in $L_1$-approximation.} 
  {Ann. Pure Appl. Logic} {\bf 121} (2003), 1--38.

\bibitem{Kop12}  
  {\sc E. Kopeck\'{a} and S. Reich,}
  {sl A note on alternating projections in Hilbert space,}
  {J. Fixed Point Theory Appl.} {\bf 12} (2012), 41--47.

\bibitem{Leu07} 
  {\sc L. Leu\c{s}tean,} 
  {\sl A quadratic rate of asymptotic regularity for CAT(0)-spaces,} 
  {J. Math. Anal. Appl.} {\bf 325} (2007), 386--399. 
  
\bibitem{Le09} 
  {\sc D. Leventhal,} 
  {\sl  Metric subregularity and the proximal point method,} 
  {J. Math. Anal. Appl.} {\bf 360} (2009), 681--688.

\bibitem{LMWY} 
  {\sc C. Li, B.~S. Mordukhovich, J.~H. Wang, and  J.~C. Yao,} 
  {\sl Weak sharp minima on Riemannian manifolds,}  
  {SIAM J. Optim.} {\bf 21} (2011), 1523--1560.
  
\bibitem{LMNP} 
  {\sc G. Li, B.~S. Mordukhovich, T.~T.~A. Nghia, and T.~S. Pham,} 
  {\sl Error bounds for parametric polynomial systems with applications to higher order stability systems and convergence rates,} 
  {Math. Program., Ser. A} {\bf 168} (2018), 313--346.

\bibitem{LFP14} 
  {\sc J. Liang, M.~J. Fadili, and G. Peyr\'{e},} 
  {\sl Local linear convergence of Forward-Backward under partial smoothness,} 
  {In: Advances in Neural Information Processing Systems (NIPS)} (2014), pp. 1979--1978.
   
\bibitem{LFP16} 
  {\sc J. Liang, J. Fadili, and G. Peyr\'{e},} 
  {\sl Convergence rates with inexact non-expansive operators,} 
  arXiv:1404.4837 [math.OC].  

\bibitem{Neumann} 
  {\sc E. Neumann,} 
  {\sl Computational problems in metric fixed point theory and their Weihrauch degrees,} 
  {Log. Method. Comput. Sci.} {\bf 11} (2015), 44pp. 

\bibitem{Nev79}  
  {\sc O. Nevanlinna and S. Reich,}
  {\sl Strong convergence of contraction semigroups and of iterative methods for accretive operators in Banach spaces,} 
  {Israel J. Math.} {\bf 32} (1979), 44--58.

\bibitem{Nic13}
  {\sc A. Nicolae,}
  {\sl Asymptotic behavior of averaged and firmly nonexpansive mappings in geodesic spaces,}
  {Nonlinear Anal.} {\bf 87} (2013), 102--115.
  
\bibitem{PetRus18}
  {\sc A. Petru\c{s}el and I.A. Rus,}
  {\sl Graphic contraction principle and applications,}
   Preprint.

\bibitem{Rei83}
 {\sc S. Reich,}
 {\sl A limit theorem for projections,}
 {Linear and Multilinear Algebra} {\bf 13} (1983),  281--290.  
 
\bibitem{R76}
  {\sc R.~T. Rockafellar,} 
  {\sl Monotone operators the proximal point algorithm,} 
  {SIAM J. Control Optim.} {\bf 14} (1976), 877--898.

\bibitem{Specker(49)} 
  {\sc E. Specker,} 
  {\sl Nicht konstruktiv beweisbare S\"atze der Analysis,} 
  {J. Symb. Logic} {\bf 14} (1949), 145--158.


\end{thebibliography}
\end{document}